\renewenvironment{proof}[1][\proofname]{\par
  \pushQED{\qed}%
  \normalfont \topsep6\p@\@plus6\p@\relax
  \trivlist
  \item\relax
  {#1\@addpunct{.}}\hspace\labelsep\ignorespaces
}{%
  \popQED\endtrivlist\@endpefalse
}
\newtheorem{theorem}{Theorem}[section]
\newtheorem{lemma}[theorem]{Lemma}
\newtheorem{proposition}[theorem]{Proposition}
\theoremstyle{definition}
\theoremstyle{remark}
\newtheorem{definition}[theorem]{\textup{Definition}}
\newtheorem{remark}[theorem]{\textup{Remark}}
\newtheorem*{remarkstar}{\textup{Remark}}
\numberwithin{equation}{section}
\newcommand{\R}{\mathbb R}
\newcommand{\N}{\mathcal{N}}
\newcommand{\levi}{\widehat{\nabla}}
\newcommand{\ac}{\nabla^{A}}
\newcommand{\acc}{\nabla^{A*}}
\newcommand{\aca}{\nabla^{A(\alpha)}}
\newcommand{\relmiddle}[1]{\mathrel{}\middle#1\mathrel{}}
\numberwithin{equation}{section}
\begin{document}
\title[A characterization of the multivariate normal distributions $\alpha$-connection]{A characterization of the alpha-connections on the statistical manifold of multivariate normal distributions}
\author{Shimpei Kobayashi}
\address{Department of Mathematics, Hokkaido University, 
Sapporo, 060-0810, Japan}
\email{shimpei@math.sci.hokudai.ac.jp}
\thanks{The first named author is partially supported by Kakenhi 22K03265.}
\keywords{Statistical manifolds; conjugate symmetries; $\alpha$-connections; solvable Lie groups; multivariate normal distributions}

\author{Yu Ohno}
\email{ono.yu.h4@elms.hokudai.ac.jp}
\thanks{The second named author is supported by the establishment of university fellowships towards the creation of
science technology innovation JPMJFS2101.}

\subjclass[2020]{Primary:~53B12,~53C15,~22E25
Secondary:~53A15}

\dedicatory{}
\begin{abstract}
 We study a statistical manifold $(\mathcal N, g^F, \ac, \acc)$ of multivariate normal distributions, where $g^F$ is the Fisher metric and $\ac$ is the Amari-Chentsov connection and $\acc$ is its conjugate connection. 
 We will show that it admits a solvable Lie group structure and moreover
 the Amari-Chentsov connection 
 $\ac$ on $(\N, g^F)$ will be characterized 
 by the conjugate symmetry, i.e., a curvatures identity $R=R^*$
 of a connection $\nabla$ and its conjugate connection $\nabla^*$.
\end{abstract}

\maketitle
\section*{Introduction}
An $n$-variate normal distribution is determined by its covariance matrix, which is a positive definite symmetric matrix of order $n$, $\operatorname{Sym}^+(n,\R)$, and its mean vector, which is an $n$-dimensional real vector, $\R^n$. Therefore, the family $\mathcal{N}$ of $n$-variate normal distributions can be identified with $\mathbb{R}^n \times\operatorname{Sym}^+(n,\R)$. For tangent vectors $X,Y,Z$ of the manifold $\mathcal{N}$ at $\theta = (\mu, \Sigma) \in \R^n \times \operatorname{Sym}^+(n,\R)$, we define a Riemannian metric $g^F$ and a symmetric tensor field 
$C^{A}$ of type $(0,3)$ (cubic form) by
\begin{align*}
    g^F(X,Y)&=\int_{\R^n} p(x, \theta)(X\log p(x, \theta))(Y\log p(x, \theta))dx,\\
    C^A(X,Y,Z)&=\int_{\R^n} p(x, \theta)(X\log p(x, \theta))(Y\log p(x, \theta))(Z\log p(x, \theta))dx,
\end{align*}
 where $p(x, \theta)$ is the probability density function
 of the $n$-variate normal distribution given by  
 \begin{equation}\label{eq:p}
  p(x, \theta)= \frac1{\sqrt {(2\pi )^{n}\det \Sigma}}
 \, {\exp \left(-\frac12(x-\mu)^{T}{\Sigma}^{-1}(x-\mu )\right)}.
 \end{equation}
 It is clear that $C^A$ is symmetric about all arguments.
 For a constant $\alpha$, we define an affine connection 
 $\aca$ by
\begin{equation}\label{eq:ac}
    g^F(\aca_{X} Y,Z)=g^F(\levi_XY,Z)-\frac{\alpha}{2}C^A(X,Y,Z),
\end{equation}
where $\levi$ is the Levi-Civita connection of the Riemannian metric $g^F$. The metric $g^F$ is called the \textit{Fisher metric} and the torsion-free affine connection $\aca$ is called the \textit{Amari-Chentsov $\alpha$-connection}. 
 In particular we call $\aca$ with $\alpha =1$ (resp. $\alpha = -1$) the \textit{Amari-Chentsov connection} (resp. \textit{Amari-Chentsov conjugate connection}), i.e., $X g^F(Y, Z) = g^F (\ac_X Y, Z) + g(Y, \acc_X Z)$ holds, 
 and abbreviate it as $\ac$ (resp. $\acc$).
 The cubic form $C^A$ is called the \textit{Amari-Chentsov cubic form}.  
 Then a quartet $(\N, g^F, \ac, \acc)$ becomes a statistical manifold,
 see section \ref{sc:Pre} for a precise definition of statistical manifolds.
 We sometimes abbreviate $\acc$ in the definition of 
 a statistical manifold, i.e., 
 a statistical manifold is denoted by $(\N, g^F, \ac)$, because $\acc$
 is determined from the conjugate relation. Note that 
 $\aca$ and $\nabla^{A(-\alpha)}$ are conjugate each other
  for any $\alpha \in \R$.

 In fact if we replace the probability density function in \eqref{eq:p} by other density function, then we obtain an another  
 statistical manifold modeled on the distribution. 
  An important feature of the probability density function of $n$-variate 
 normal distribution in \eqref{eq:p} is that it is an \textit{exponential 
 family} and thus both $\ac$ and $\acc$ are flat and torsion free, i.e., 
 $(\N, g^F, \ac, \acc)$ admits the \textit{dually flat structure}, and 
 it is important in information geometry, see \cite{AN}.
Note that the explicit form of the Fisher metric and the Amari-Chentsov $\alpha$-connection of elliptic distributions, which are generalizations of multivariate normal distributions, was computed in \cite{Mit}. 

Recently, in \cite{FH2} it has shown that the statistical manifold $(\N, g^F, \ac)$ for $n=1$, i.e., in case of the normal distribution,  admits a solvable Lie group structure:
 The manifold $\N$ admits a solvable Lie group structure, and $g^F$ and $\ac$ are a left-invariant metric and a left-invariant connection on $\mathcal N$, respectively. Therefore notion of \textit{statistical Lie groups} has  been naturally introduced, see Definition \ref{def:Liegroup}.
  It is evident that for the  solvable Lie group with the Fisher metric $(\N, g^F)$, an  affine connection $\nabla$ such that $(\N, g^F, \nabla)$ becomes a
  statistical Lie group is not unique. In fact it is easy to give many such examples, since left-invariant connections are determined at one point 
  and the statistical structure condition is easily satisfied, see \cite{IO} for general construction about homogeneous statistical manifolds.
  {So a fundamental problem is how to characterize
  the Amari-Chentsov $\alpha$-connection $\aca$ on $(\N, g^F)$.} 
  An answer for $(\N, g^F)$ for $n=1$ was given in \cite{FH2} by the
  \textit{conjugate symmetry}, i.e., if  $R = R^*$ holds, 
  where  $R$ and $R^*$ are respectively curvatures of $\nabla$ and $\nabla^*$, then $\nabla$ has to be $\aca$ for some $\alpha \in \mathbb{R}$.
 Note that $(\N, g^F, \aca)$
 clearly satisfies the conjugate symmetry condition, and notion of the 
 conjugate symmetry was first defined in \cite{Lau}.
 
  In this paper, we will generalize the results of \cite{FH2} as follows:
  We will first show that the statistical manifold $(\N, g^F, \aca)$ admits a statistical Lie group structure, in particular $\N$ becomes a solvable Lie group and the Fisher 
  metric $g^F$ becomes left-invariant, Theorem \ref{thm:stLie}. This is a straightforward generalization of \cite{FH2} which has shown the same result
  in case of $n=1$.
  We will next show that the Amari-Chentsov $\alpha$-connection $\aca$ will be 
  characterized by the conjugate symmetry of $(\N, g^F, \nabla)$, Theorem \ref{thm:main}. 
  We emphasize that this result is not 
  a straightforward generalization of \cite{FH2}. In case of $n=1$,
  the covariant matrix $\Sigma$ is just a scalar matrix and it is easy to 
  derive the conjugate symmetry conditions on coefficients of a left-invariant connection, i.e., they are fewer linear equations among the coefficients.
  On the other hand, our case is clearly more {complicated}, i.e., 
  the covariant matrix $\Sigma$ is a positive {definite symmetric matrix} of order $n$
 and  the conjugate symmetry conditions give a large number of linear equations 
 among the coefficients. We will therefore introduce a canonical orthonormal basis on the Lie algebra of $\mathcal N$, see section \ref{sec:multi} and we will solve successfully these equations and will 
 show there exists exactly 
 $1$-parameter family of solutions which is the Amari-Chentsov
  $\alpha$-connection, see section \ref{sec:main} in details.
  
  As a matter of fact, we consider a special class of Riemannian manifolds, i.e., the $n$-variate normal distributions $\mathcal N$ and the Fisher metric $g^F$,
  and moreover, an affine connection is assumed to be left-invariant with respect to 
  the solvable Lie group structure of $(\N, g^F)$. These assumptions are
  essential: First if we replace $(\N, g^F)$ by a general Lie group
  $G$ with left-invariant metric $g$, then the characterization is not true, i.e.,
  there are many left-invariant connections (depending on the dimension of kernel of linear equations) such that $(G, g, \nabla)$ are conjugate symmetric. Moreover, it  is not true even if 
 we restrict $g=g^F$, i.e., the metric Lie group $(G, g^F)$ comes from 
a certain probability distribution. 
 There are many conjugate symmetric 
 statistical Lie groups $(G, g^F, \nabla)$ which are $\nabla \neq  \aca$ for any $\alpha \in \R$.
 The authors would not know that this characterization holds for other particular statistical Lie groups which are determined from probability distributions.
  
 The paper is organized as follows:
 After some preliminaries in section \ref{sc:Pre}, 
 in section \ref{sec:multi}, we will show that the statistical 
 manifold $(\N, g^F, \ac)$ 
 is a statistical Lie group, and in particular, the manifold $\N$ admits a solvable Lie group structure. Moreover, we will introduce a special orthonormal basis of the Lie algebra of $\N$. In section \ref{sec:main}, we will first obtain the coefficients of a left-invariant connection with respect to the basis. 
 We will compute linear equations among the coefficients, determined from the conjugate symmetry of the statistical manifold, and finally prove that 
 the left-invariant connection is the Amari-Chentsov $\alpha$-connection. 
 
\section{Preliminaries}\label{sc:Pre}
Let $M$ be a manifold, $g$ a Riemannian metric and $\nabla$ a torsion free affine connection on $M$. We define a tensor field $C$ of type $(0,3)$ by
\begin{align}
    C(X,Y,Z)=(\nabla_Xg)(Y,Z), \label{def:C}
\end{align}
where $X,Y,Z\in\mathcal{X}(M)$. If the tensor field $C$ is symmetric, then this pair $(g,\nabla)$ is called a \textit{statistical structure}, and  $C$ is called the \textit{cubic form}. A manifold $M$ together with a statistical structure is called a \textit{statistical manifold}, and it will be denoted by a triad $(M,g,\nabla)$. Let $\levi$ be the Levi-Civita connection of the Riemannian metric $g$, and we define a tensor field $K$ by
$K(X,Y)=\nabla_XY-\levi_XY$.
We will call this tensor field $K$ the \textit{difference tensor} of $(M,g,\nabla)$. In addition, we define a tensor field $K_X$ of type $(1,1)$ by $K_XY:=K(X,Y)$.
Since the affine connections $\nabla$ and $\levi$ are torsion free, $K$ is symmetric, i.e.,
$K(X,Y)-K(Y,X)=0$.
The \textit{conjugate connection} $\nabla^*$ of $\nabla$ with respect to the Riemannian metric $g$ is defined by
\begin{align}
    Xg(Y,Z)=g(\nabla_XY,Z)+g(Y,\nabla^*_XZ), \label{def:nabla*}
\end{align}
where $X,Y,Z\in\mathcal{X}(M)$. Note that the Levi-Civita connection $\levi$ is the mean of the affine connection $\nabla$ and its conjugate affine connection $\nabla^*$, $i.e.$,
\begin{align}
    \levi=\frac{\nabla+\nabla^*}{2}. \label{eq:levi}
\end{align}
 Moreover, 
\begin{align}
    K_X=\frac{\nabla_X-\nabla^*_X}{2}  \label{eq:K}
\end{align}
holds. It is easy to see that 
$(\nabla^*_Xg)(Y,Z)=-C(X,Y,Z)$ holds and thus $(M,g,\nabla)$ is a statistical manifold if and only if $(M,g,\nabla^*)$ is a statistical manifold.
The following results are basic, see \cite[section 1]{BNS}.
\begin{lemma}\label{Lem:CK}
For a statistical manifold $(M,g,\nabla)$, the following identities hold$:$
\begin{enumerate}
    \item $C(X,Y,Z)=-2g(K(X,Y),Z)$,
    \item $(\levi_XC)(Y,Z,W)=-2g((\levi_XK)(Y,Z),W)$.
\end{enumerate}
\end{lemma}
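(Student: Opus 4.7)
The plan is to derive both identities directly from the definitions by exploiting the compatibility $\levi g = 0$ together with the two symmetries that characterize a statistical structure.

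For part (1), I would first unfold the cubic form via $C(X,Y,Z) = (\nabla_X g)(Y,Z) = Xg(Y,Z) - g(\nabla_X Y, Z) - g(Y, \nabla_X Z)$, then substitute $\nabla_X = \levi_X + K_X$ and use $Xg(Y,Z) = g(\levi_X Y, Z) + g(Y, \levi_X Z)$. This produces the preliminary identity $C(X,Y,Z) = -g(K(X,Y),Z) - g(Y, K(X,Z))$. The main obstacle is that this is not yet the target $-2g(K(X,Y),Z)$; closing the gap amounts to showing that $K_X$ is self-adjoint with respect to $g$. To establish this, I would combine two facts already noted in the excerpt: $K$ is symmetric in its two arguments because both $\nabla$ and $\levi$ are torsion-free, and $C$ is totally symmetric because $(M,g,\nabla)$ is a statistical manifold. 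Comparing $C(X,Y,Z)$ with $C(Y,X,Z)$ in the preliminary identity and applying $K(X,Y)=K(Y,X)$ gives $g(Y,K(X,Z)) = g(X,K(Y,Z))$; a further swap, together with the symmetry of $g$, yields $g(Y,K(X,Z)) = g(K(X,Y),Z)$, after which (1) is immediate.

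For part (2), I would covariantly differentiate the identity of (1) with respect to $\levi$. Because $\levi g = 0$, applying the Leibniz rule to the right-hand side produces $-2g(\levi_X(K(Y,Z)),W) - 2g(K(Y,Z), \levi_X W)$, while expanding $(\levi_X C)(Y,Z,W) = XC(Y,Z,W) - C(\levi_X Y, Z, W) - C(Y,\levi_X Z, W) - C(Y,Z,\levi_X W)$ on the left and invoking (1) for each term gives a matching $g(K(Y,Z),\levi_X W)$ term that cancels. What remains rearranges, via the definition $(\levi_X K)(Y,Z) := \levi_X(K(Y,Z)) - K(\levi_X Y, Z) - K(Y,\levi_X Z)$ of the covariant derivative of the $(1,2)$-tensor $K$, into precisely $-2g((\levi_X K)(Y,Z),W)$. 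This step is essentially bookkeeping once (1) is in hand, so no new difficulty is expected beyond the self-adjointness argument required for (1).
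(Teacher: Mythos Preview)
Your argument is correct. The computation for (1) is exactly right: from $C(X,Y,Z)=-g(K(X,Y),Z)-g(Y,K(X,Z))$ you correctly extract the self-adjointness of $K_X$ by first obtaining $g(Y,K(X,Z))=g(X,K(Y,Z))$ from the $X\leftrightarrow Y$ symmetry and then iterating with a further swap to reach $g(Y,K(X,Z))=g(K(X,Y),Z)$. Part (2) is, as you say, pure Leibniz-rule bookkeeping together with $\levi g=0$, and your cancellation is accurate.

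As for comparison with the paper: the paper does not actually prove this lemma. It simply records the result as ``basic'' and cites \cite[section~1]{BNS}. So there is no in-paper argument to contrast with yours; your direct derivation is the standard one and would serve perfectly well as a self-contained proof here.
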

\begin{remark}
Suppose that a totally symmetric tensor field $C$ of type $(0,3)$ is given on a Riemannian manifold $(M,g)$, and we define the tensor field $K$ of (1,2) by
$C(X,Y,Z)=-2g(K(X,Y),Z)$
and define the affine connection $\nabla$ by $\nabla_XY=K(X,Y)-\levi_XY$.
    Then the triplet $(M,g,\nabla)$ becomes a statistical manifold. Thus a Riemannian manifold $(M,g)$ together with a totally symmetric tensor field $C$ of type $(0,3)$ can be identified with a statistical manifold $(M,g,\nabla)$. Note that $(g^F, \aca)$ in \eqref{eq:ac} is thus a statistical structure on $\N$.
\end{remark}

Let $R$ be the curvature tensor field of the connection $\nabla$, i.e., 
\begin{align}
    R(X,Y)Z=\nabla_X\nabla_YZ-\nabla_Y\nabla_XZ-\nabla_{[X,Y]}Z,
\end{align}
and let $R^*$ be the curvature tensor field of its dual connection $\nabla^*$. We define a conjugate symmetry for a statistical manifold.

\begin{definition}[section 3 in \cite{Lau}]
If the curvature tensor fields $R$ and $R^*$ satisfy $R=R^*$, 
then a statistical manifold $(M,g,\nabla)$ is called the \textit{conjugate symmetric}.
\end{definition}


We define an affine connection $\nabla^{(\alpha)}$ by
\begin{align}
    \nabla^{(\alpha)}:=\levi+\alpha K,
\end{align}
where $\alpha$ is a real constant. The triplet $(M,g,\nabla^{(\alpha)})$ is also a statistical manifold. We call the set of the affine connections $\{\nabla^{(\alpha)}\}_{\alpha \in \mathbb{R}}$ the \textit{$\alpha$-connections}. Note that the affine connection $\nabla^{(\alpha)}$ is the conjugate connection of the affine connection $\nabla^{(-\alpha)}$. Moreover, it follows that $\nabla^{(1)}=\nabla$ and $\nabla^{(-1)}=\nabla^*$. Let $R^{(\alpha)}$ denote the curvature tensor of the affine connection $\nabla^{(\alpha)}$.

\begin{lemma} [Proposition 2.8 in \cite{KO}] \label{lem:consym}
    If a statistical manifold $(M,g,\nabla)$ is conjugate symmetric, then the statistical manifold $(M,g,\nabla^{(\alpha)})$ is also conjugate symmetric.
\end{lemma}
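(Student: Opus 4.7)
The plan is to decompose the curvature $R^{(\alpha)}$ of $\nabla^{(\alpha)} = \levi + \alpha K$ into pieces that depend on $\alpha$ in a controlled (polynomial) way, and then read off the conjugate symmetry condition $R^{(\alpha)} = R^{(-\alpha)}$ as the vanishing of the odd-in-$\alpha$ part. Writing $\hat{R}$ for the curvature of $\levi$ and using that $\levi$ is torsion-free, a direct expansion of
\begin{align*}
\nabla^{(\alpha)}_X \nabla^{(\alpha)}_Y Z - \nabla^{(\alpha)}_Y \nabla^{(\alpha)}_X Z - \nabla^{(\alpha)}_{[X,Y]} Z
\end{align*}
should yield a formula of the shape
\begin{align*}
R^{(\alpha)}(X,Y)Z = \hat{R}(X,Y)Z + \alpha\bigl[(\levi_X K)(Y,Z) - (\levi_Y K)(X,Z)\bigr] + \alpha^2\bigl[K_X, K_Y\bigr] Z,
\end{align*}
where the cross terms $K_{\levi_X Y} Z - K_{\levi_Y X} Z - K_{[X,Y]} Z$ cancel because $\levi$ has no torsion. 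This is the one routine calculation I would carry out carefully; everything else follows formally.

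From this expression I would immediately read off
\begin{align*}
R^{(\alpha)}(X,Y)Z - R^{(-\alpha)}(X,Y)Z = 2\alpha\bigl[(\levi_X K)(Y,Z) - (\levi_Y K)(X,Z)\bigr].
\end{align*}
The hypothesis that $(M, g, \nabla)$ is conjugate symmetric means $R = R^{(1)}$ equals $R^* = R^{(-1)}$, so specializing $\alpha = 1$ gives $(\levi_X K)(Y,Z) = (\levi_Y K)(X,Z)$ identically. This is a closed condition on $\levi K$ not involving $\alpha$, so substituting back into the previous display yields $R^{(\alpha)} = R^{(-\alpha)}$ for every $\alpha \in \R$. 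Since $\nabla^{(\alpha)}$ and $\nabla^{(-\alpha)}$ are conjugate to one another (the conjugate of $\levi + \alpha K$ with respect to $g$ is $\levi - \alpha K$, as recorded in the text), this is precisely the conjugate symmetry of $(M, g, \nabla^{(\alpha)})$.

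There is no real obstacle here; the only thing to be careful about is the bookkeeping in the curvature expansion, in particular verifying the cancellation of the torsion-like term $K_{\levi_X Y} - K_{\levi_Y X} - K_{[X,Y]}$ and the symmetry of $K$ needed implicitly. One could also phrase the proof invariantly via Lemma \ref{Lem:CK}(2) by translating $\levi K$ into $\levi C$, but I would keep it at the level of $K$ since the quantity $(\levi_X K)(Y,Z) - (\levi_Y K)(X,Z)$ is exactly the Codazzi-type tensor whose vanishing is well known to be equivalent to conjugate symmetry, and this framing makes the $\alpha$-independence transparent.
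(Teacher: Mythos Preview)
Your argument is correct; the paper gives no proof of its own here, simply citing \cite{KO}. The curvature expansion and the observation that conjugate symmetry at $\alpha=1$ kills the odd-in-$\alpha$ term $(\levi_X K)(Y,Z)-(\levi_Y K)(X,Z)$ is the standard route, essentially the equivalence $(2)\Leftrightarrow(5)$ attributed to \cite{BNS} in the proof of Theorem~\ref{thm:main}.
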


\begin{remark}
It is well known that if a statistical manifold is given by an
exponential family, the Fisher metric $g^F$ and the Amari-Chentsov connection $\ac, (M, g^F, \ac, \acc)$ as in introduction, then the curvatures vanish,
 i.e., $R = R^*=0$ and it has a dually flat structure, which is of course a special case of
conjugate symmetric statistical manifolds.
Moreover, from Lemma \ref{lem:consym}, 
the statistical manifold
$(M, g^F, \aca)$ given by the Amari-Chentsov $\alpha$-connection is conjugate symmetric for all $\alpha \in \R$. Note that the curvature $R^{(\alpha)} = {R^{(\alpha)}}^*$ does 
not vanish for $\alpha \neq \pm 1$ in general.
 Please see \cite{AN} for details.
\end{remark}

 We now recall notion of statistical Lie groups defined in \cite{FH2}.
\begin{definition}\label{def:Liegroup}
    Suppose that $G$ is a Lie group. If a triplet $(G,g,\nabla)$ is a statistical manifold and both the metric $g$ and the connection $\nabla$ are left invariant, then the statistical manifold $(G,g,\nabla)$ will be called a \textit{statistical Lie group}.
\end{definition}

\section{The statistical manifold of multivariate normal distributions}\label{sec:multi}

In this section, we will show that the family of $n$-variate normal distributions admits a solvable Lie group structure.
 Recall that the probability distribution function of the $n$-variate normal distribution $p$ is written as
\begin{align}
        p(x;\mu,\Sigma)=\frac{1}{\sqrt{(2\pi)^n\det (\Sigma)}}\exp \left\{-\frac{1}{2}(x-\mu)^T\Sigma^{-1}(x-\mu) \right\},
\end{align}
using its mean vector $\mu\in\mathbb{R}^n$ and its covariance matrix $\Sigma\in\operatorname{Sym}^+(n,\mathbb R)$, where $\operatorname{Sym}^+(n,\mathbb R)$ denotes the set of all positive definite symmetric matrices of size $n$. Let $\N=\R^n\times \operatorname{Sym}^+(n,\mathbb R)$. The Fisher metric $g^F$ and the Amari-Chentsov connection $\nabla^A$ on $\mathcal N$ given in introduction can be computed as follows:
\begin{lemma}[section 3 and section 4 in \cite{Mit}]\label{lem:g^F}
    Let $g^F$ be the Fisher metric and $C^A$ be the Amari-Chentsov cubic form of $\N$. If $u,v,w$ are coordinate vector fields in the $\mu$-direction and $X,Y,Z$ are coordinate vector fields in the $\Sigma$-direction, then {the following identities hold}$:$
    \begin{equation}\label{eq:gF}
\left\{
    \begin{array}{l}
    g^F_{(\Sigma,\mu)}(u,v)=u^T\Sigma^{-1}v, \\
    g^F_{(\Sigma,\mu)}(u,X)=0, \\
    g^F_{(\Sigma,\mu)}(X,Y)=\frac{1}{2}\operatorname{tr}(\Sigma^{-1}X\Sigma^{-1}Y), 
    \end{array}
\right.
\end{equation}
 and
\begin{equation}
\left\{
\begin{array}{l}
    C^A_{(\Sigma,\mu)}(u,v,w)=0, \\
    C^A_{(\Sigma,\mu)}(X,v,w)=v^T\Sigma^{-1}X\Sigma^{-1}w, \\
    C^A_{(\Sigma,\mu)}(X,Y,w)=0, \\
    C^A_{(\Sigma,\mu)}(X,Y,Z)=\operatorname{tr}(\Sigma^{-1}X\Sigma^{-1}Y\Sigma^{-1}Z).
\end{array}
\right.
\end{equation}
\end{lemma}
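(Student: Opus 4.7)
The plan is to verify all seven identities by direct computation of the defining integrals. Using the explicit formula
\begin{align*}
\log p(x;\mu,\Sigma) = -\tfrac{n}{2}\log(2\pi) - \tfrac{1}{2}\log\det\Sigma - \tfrac{1}{2}(x-\mu)^T\Sigma^{-1}(x-\mu),
\end{align*}
together with the matrix-calculus identities $X(\log\det\Sigma)=\tr(\Sigma^{-1}X)$ and $X(\Sigma^{-1})=-\Sigma^{-1}X\Sigma^{-1}$, I would first record
\begin{align*}
u(\log p) = u^T\Sigma^{-1}(x-\mu), \qquad X(\log p) = \tfrac{1}{2}\bigl((x-\mu)^T\Sigma^{-1}X\Sigma^{-1}(x-\mu) - \tr(\Sigma^{-1}X)\bigr)
\end{align*}
for $u\in\R^n$ and $X\in\operatorname{Sym}(n,\R)$. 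Setting $y=x-\mu$, each integrand in the lemma becomes a polynomial in $y$ evaluated against the mean-zero Gaussian measure with covariance $\Sigma$, so every formula reduces to a Gaussian moment computed by Wick's theorem.

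Three identities are immediate by parity: the integrands defining $g^F(u,X)$, $C^A(u,v,w)$ and $C^A(X,Y,w)$ have odd total degree in $y$ and vanish by Gaussian symmetry. The formula $g^F(u,v)=u^T\Sigma^{-1}v$ uses only $\E[yy^T]=\Sigma$. For $g^F(X,Y)$ and $C^A(X,v,w)$ one applies the fourth-moment Wick identity
\begin{align*}
\E[y_iy_jy_ky_l]=\Sigma_{ij}\Sigma_{kl}+\Sigma_{ik}\Sigma_{jl}+\Sigma_{il}\Sigma_{jk};
\end{align*}
the ``self-pairing'' Wick piece cancels the constant $\tr(\Sigma^{-1}X)$ and $\tr(\Sigma^{-1}Y)$ contributions coming from $X(\log p)$ and $Y(\log p)$, while the two ``cross-pairings'' combine, by symmetry of $X,Y,\Sigma$ and cyclicity of trace, into $\tfrac{1}{2}\tr(\Sigma^{-1}X\Sigma^{-1}Y)$ and $v^T\Sigma^{-1}X\Sigma^{-1}w$ respectively.

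The main obstacle is the last identity $C^A(X,Y,Z)=\tr(\Sigma^{-1}X\Sigma^{-1}Y\Sigma^{-1}Z)$, which requires the sixth-order Wick expansion (fifteen pairings) of $\prod_{i=1}^{3}\tfrac{1}{2}(y^T\Sigma^{-1}A_iy - \tr(\Sigma^{-1}A_i))$ with $(A_1,A_2,A_3)=(X,Y,Z)$. I would organise the fifteen pairings by their bipartite structure on the three quadratic factors: one ``all-self'' pairing; six ``one-self, two-cross'' pairings; and eight ``all-cross'' cyclic pairings. The first two classes yield contributions proportional to $\tr(\Sigma^{-1}X)\tr(\Sigma^{-1}Y)\tr(\Sigma^{-1}Z)$ and $\tr(\Sigma^{-1}A_i)\tr(\Sigma^{-1}A_j\Sigma^{-1}A_k)$, and a direct bookkeeping shows they cancel exactly against the analogous terms produced when the explicit $\tr(\Sigma^{-1}A_i)$ factors in the expansion multiply the lower-order Wick contractions of the remaining quadratic factors. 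Each of the eight all-cross pairings evaluates, by cyclicity of trace and symmetry of $X,Y,Z$ and $\Sigma$, to the same scalar $\tr(\Sigma^{-1}X\Sigma^{-1}Y\Sigma^{-1}Z)$; combined with the overall $\tfrac{1}{8}$ prefactor this yields the claimed formula. The difficulty is purely combinatorial---organising the fifteen pairings, tracking multiplicities, and verifying the cancellations---but once the classes are laid out the algebra is mechanical.
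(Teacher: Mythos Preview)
Your proposal is correct. The paper does not actually prove this lemma: it is stated with the citation ``section~3 and section~4 in \cite{Mit}'' and then used without further justification. Your direct computation---differentiating $\log p$ explicitly, reducing to centered Gaussian moments in $y=x-\mu$, and invoking Wick's theorem for the fourth- and sixth-order terms---is the standard way to obtain these formulas and supplies exactly the self-contained argument the paper omits. One small point worth making explicit when you write it out: the collapse of the eight ``all-cross'' pairings in the sixth-moment term to the single scalar $\tr(\Sigma^{-1}X\Sigma^{-1}Y\Sigma^{-1}Z)$ relies on $X,Y,Z,\Sigma$ being symmetric (so that the two cyclic orders of the trace agree); this is of course built into the setting, but it is the one place where an unwary reader might lose a factor of two.
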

We now define 
\begin{align}
    &R:=\left\{ T\in \operatorname{GL}(n,\mathbb{R})\mid
    \begin{array}{l}
   \textrm{$T$ is an upper triangular matrix}
     \\ \textrm{with positive diagonal entries.} 
     \end{array}
     \right\}, \label{eq:R}
\end{align}
and 
\begin{align*}
\operatorname{Aff}^s(n,\mathbb{R})
:= R \ltimes \R^n.
\end{align*}
Then the group $\operatorname{Aff}^s(n,\mathbb{R})$ is solvable and acts transitively on $\N$ by the action
\begin{align}
     (A,b)\cdot (\Sigma,\mu)=(A\Sigma A^T,A\mu +b), \label{eq:eff}
\end{align}
where $A\in R$ and $b \in \mathbb{R}^n$. Moreover, the tangent action of $(A,b)\in \operatorname{Aff}^s(n,\mathbb{R})$ on the tangent bundle $T\N$ is given by
\begin{align}
    (A,b)\cdot(X,v)=(AXA^T,Av), \label{eq:teff}
\end{align}
where $(\Sigma,\mu)\in \N$ and $(X,v)\in T_{(\Sigma,\mu)}\N$.
\begin{proposition}[section 3.2 in \cite{GQ}]\label{Prop:GQ}
    The action of $\operatorname{Aff}^s(n,\mathbb{R})$ on $\N$ is simply transitive. In particular, $\operatorname{Aff}^s(n,\mathbb{R})$ and $\N$ are isomorphic as an $\operatorname{Aff}^s(n,\mathbb{R})$-manifold.
\end{proposition}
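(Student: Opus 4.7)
My plan is to verify separately that the action \eqref{eq:eff} is transitive and that the stabilizer of the base point $(I,0)\in\N$ is trivial; by the usual orbit--stabilizer argument these together yield simple transitivity. The identification $\operatorname{Aff}^s(n,\mathbb{R})\cong\N$ will then come from the orbit map $\Phi\colon (A,b)\mapsto(A,b)\cdot(I,0)=(AA^T,b)$, which will be a smooth bijection between manifolds of equal dimension with smooth inverse.

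For transitivity, given $(\Sigma,\mu)\in\N$ I must produce $A\in R$ with $AA^T=\Sigma$ (and then set $b=\mu$). The required existence is precisely the upper-triangular Cholesky decomposition: every $\Sigma\in\operatorname{Sym}^+(n,\R)$ admits a factorization $\Sigma=AA^T$ with $A$ upper triangular and positive on the diagonal. I would obtain this by induction on $n$ using Schur complements (peeling off the $(n,n)$ entry and its row/column), or equivalently by transposing the classical lower-triangular Cholesky theorem after conjugating by the reversal permutation. For freeness at $(I,0)$, the condition $(A,b)\cdot(I,0)=(I,0)$ forces $b=0$ and $AA^T=I$, so $A$ is orthogonal; then $A^{-1}=A^T$ is lower triangular while $A^{-1}$ must also be upper triangular (inverses preserve upper triangularity), so $A$ is diagonal, and $a_{ii}^2=1$ with $a_{ii}>0$ gives $A=I$.

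For the smooth-manifold identification, $\Phi$ is clearly smooth since its components are polynomial in the entries of $A$ and $b$, so only smoothness of $\Phi^{-1}$ needs argument. I would deduce this either by the inverse function theorem after checking that $d\Phi$ is nowhere singular (a dimension count plus the triviality of the stabilizer), or by pointing to the explicit recursive formulas of the Cholesky algorithm, which express the entries of $A$ as smooth (indeed real-analytic) functions of the entries of $\Sigma$ on $\operatorname{Sym}^+(n,\R)$. The main obstacle, though minor, is matching sign conventions so that the factor lands in precisely the group $R$ defined in \eqref{eq:R} rather than in its lower-triangular analogue; once this is set up, both existence and uniqueness fall out of the same induction.
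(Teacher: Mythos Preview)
Your proposal is correct and essentially matches the paper's approach: the paper does not supply a self-contained proof but cites \cite{GQ} for the result and then remarks that the orbit map $\phi(A,b)=(AA^T,b)$ is a bijection by the existence and uniqueness of the Cholesky decomposition $\Sigma=AA^T$ \cite[Corollary 7.2.9]{HRC}. Your argument simply unpacks this, adding the direct verification of freeness (an orthogonal upper-triangular matrix with positive diagonal is the identity) and the smoothness of $\phi^{-1}$, both of which the paper leaves implicit.
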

From now on, through this paper, we assume that $\N$ has the solvable Lie group structure via the isomorphism $\phi$.
Note that in Proposition \ref{Prop:GQ}, the isomorphism $\phi :\operatorname{Aff}^s(n,\mathbb{R}) \to \N$ is given by 
\begin{align}
    \phi((A,b))=(AA^T,b), \label{eq:isom}
\end{align}
and it follows that $\phi$ is a bijection from the uniqueness of the Cholesky decomposition $\Sigma = AA^T$ \cite[Corollary 7.2.9]{HRC}. 
With these preparations, we obtain the following theorem.
\begin{theorem}\label{thm:stLie}
    The triplet $(\N,g^F,C^A)$ is a statistical Lie group.
\end{theorem}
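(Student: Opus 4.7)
The plan is to transport the Lie group structure of $\operatorname{Aff}^s(n,\R)$ to $\N$ via the bijection $\phi$ of \eqref{eq:isom}, and then verify that the action \eqref{eq:eff} coincides under $\phi$ with left multiplication, so that invariance under this action is equivalent to left invariance. Recall that the multiplication on $\operatorname{Aff}^s(n,\R) = R\ltimes \R^n$ is $(A,b)(A',b')=(AA',Ab'+b)$. A short computation gives
\begin{align*}
\phi\bigl((A,b)(A',b')\bigr)=(AA'A'^TA^T,\,Ab'+b)=(A,b)\cdot\phi((A',b')),
\end{align*}
so left translation on $\operatorname{Aff}^s(n,\R)$ is intertwined by $\phi$ with the action \eqref{eq:eff} on $\N$. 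Hence it suffices to show that $g^F$ and $C^A$ are invariant under \eqref{eq:eff}, with the tangent action given by \eqref{eq:teff}.

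Next I would check the invariance of $g^F$ by a direct substitution into the formulas of Lemma \ref{lem:g^F}. For example, using $(A\Sigma A^T)^{-1}=A^{-T}\Sigma^{-1}A^{-1}$, one finds
\begin{align*}
g^F_{(A\Sigma A^T,A\mu+b)}(AXA^T,AYA^T)
&=\tfrac{1}{2}\tr\bigl(A^{-T}\Sigma^{-1}A^{-1}\,AXA^T\,A^{-T}\Sigma^{-1}A^{-1}\,AYA^T\bigr)\\
&=\tfrac{1}{2}\tr(\Sigma^{-1}X\Sigma^{-1}Y)=g^F_{(\Sigma,\mu)}(X,Y),
\end{align*}
and analogously $g^F(Au,Av)=u^T\Sigma^{-1}v$, while the mixed $(u,X)$ component vanishes before and after. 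The same cancellation pattern proves invariance of $C^A$: the nonzero components $\tr(\Sigma^{-1}X\Sigma^{-1}Y\Sigma^{-1}Z)$ and $v^T\Sigma^{-1}X\Sigma^{-1}w$ are unchanged by the substitution $\Sigma\rsa A\Sigma A^T$, $X\rsa AXA^T$, $v\rsa Av$, $w\rsa Aw$ since all factors of $A$ and $A^T$ telescope through inverses.

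Once both $g^F$ and $C^A$ are invariant under the transitive action, they are left-invariant tensors on the Lie group $\N\cong\operatorname{Aff}^s(n,\R)$. The Levi-Civita connection $\levi$ of the left-invariant metric $g^F$ is then automatically left-invariant, and since $\ac$ is determined from $g^F$ and $C^A$ by \eqref{eq:ac} with $\alpha=1$, it too is left-invariant. Combined with the statistical structure already noted after Lemma \ref{Lem:CK}, this gives the claim.

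The computation is essentially routine; the only care needed is to confirm that $\phi$ carries the group law of $\operatorname{Aff}^s(n,\R)$ to the expected transitive action on $\N$, so that ``invariance under \eqref{eq:eff}'' really means ``left-invariance as a Lie group.'' I do not anticipate any genuine obstacle—the main point is the clean matrix-algebra cancellation that makes both $g^F$ and $C^A$ visibly invariant once written in terms of $\Sigma^{-1}$.
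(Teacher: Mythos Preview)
Your proposal is correct and follows essentially the same approach as the paper: direct verification, using the formulas of Lemma~\ref{lem:g^F} and the tangent action \eqref{eq:teff}, that each component of $g^F$ and $C^A$ is invariant under the $\operatorname{Aff}^s(n,\R)$-action, hence left-invariant on $\N$. Your explicit check that $\phi$ intertwines group multiplication with the action \eqref{eq:eff} is a welcome clarification that the paper leaves implicit in its appeal to Proposition~\ref{Prop:GQ}.
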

\begin{proof}
    From Proposition \ref{Prop:GQ}, the manifold $\N$ has the Lie group structure. Using the Lemma \ref{lem:g^F}, we see that
    \begin{align*}
    g^F_{(A\Sigma A^T,A\mu+b)}(Au,Av)&=(Au)^T(A\Sigma A^T)^{-1}(Av) \\
    &=u^T\Sigma^{-1}v \\
    &=g^F_{(\Sigma,\mu)}(u,v), 
    \intertext{and}
    g^F_{(A\Sigma A^T,A\mu+b)}(AXA^T,AYA^T)&=\frac{1}{2}\operatorname{tr}((A\Sigma A^T)^{-1}AXA^T(A\Sigma A^T)^{-1}AYA^T) \\
    &=\frac{1}{2}\operatorname{tr}(\Sigma^{-1}X\Sigma^{-1}Y) \\
    &=g^F_{(\Sigma,\mu)}(X,Y).
\end{align*}
Thus the Fisher metric $g^F$ is left invariant. Moreover, we see that
\begin{align*}
    C^A_{(A\Sigma A^T,A\mu+b)}(AXA^T,Av,Aw)&=(Av)^T(A\Sigma A^T)^{-1}AXA^T(A\Sigma A^T)^{-1}Aw \\
    &=v^T\Sigma^{-1}X\Sigma^{-1}w \\
    &=C^A_{(\Sigma,\mu)}(X,v,w),
\end{align*}
and
\begin{align*}
C^A&_{(A\Sigma A^T,A\mu+b)} (AXA^T,AYA^T,AZA^T)\\&=\frac{1}{2}\operatorname{tr}((A\Sigma A^T)^{-1}AXA^T(A\Sigma A^T)^{-1}AYA^T(A\Sigma A^T)^{-1}AZA^T) \\
    &=\frac{1}{2}\operatorname{tr}(\Sigma^{-1}X\Sigma^{-1}Y\Sigma^{-1}Z) \\
    &=C^A_{(\Sigma,\mu)}(X,Y,Z).
\end{align*}
Thus the cubic form is left invariant. This completes the proof.
\end{proof}
The statistical Lie group $\N\cong \operatorname{Aff}^s(n,\mathbb{R})$ can be realized with
\begin{align*}
    \operatorname{Aff}^s(n,\mathbb{R})=R\ltimes\mathbb{R}^n =\left\{ T=
\begin{pmatrix}
A & b \\
0 & 1
\end{pmatrix}
\relmiddle| A\in R, b\in\mathbb{R}^n \right\},
\end{align*}
where $R$ is defined in \eqref{eq:R}.
Note that for $n=1$, $A$ is the standard deviation and $b$ is the mean. Since an arbitrary element $T\in \operatorname{Aff}^s(n,\mathbb{R})$ is an upper triangular matrix with positive diagonal entries, and therefore $\N$ is a solvable Lie group.

Let $\mathfrak{aff}^s(n,\mathbb{R})$ $(\textrm{resp.} ~\mathfrak{N})$ be the Lie algebra of $\operatorname{Aff}^s(n,\mathbb{R})$ $(\textrm{resp.}~\N)$, that is, the tangent space of $\operatorname{Aff}^s(n,\mathbb{R})$ $(\textrm{resp.} ~\N)$ at the unit element $(I_n,0)$, where $I_n$ is the $n \times n$ identity matrix. It is easy to see that
\begin{align*}
    \mathfrak{aff}^s(n,\mathbb{R})=\left\{
\begin{pmatrix}
U & u \\
0 & 0
\end{pmatrix}
\relmiddle| U \in UT(n,\mathbb{R}), u\in\mathbb{R}^n \right\}.
\end{align*}
Here, $UT(n,\mathbb{R})$ denotes the set of $n\times n$ upper triangular matrices. For simplicity, we will denote $\begin{pmatrix}
U & u \\
0 & 0
\end{pmatrix}$ by $(U,u)$. The differential of $\phi$ in \eqref{eq:isom} at ${(I_n,0)}$ is given by
\begin{align}
    d\phi _{(I_n,0)}(U,u)=(U+U^T, u), \quad (U,u) \in \mathfrak{aff}^s(n,\mathbb{R}). \label{eq:difisom}
\end{align}
By the equation \eqref{eq:difisom}, we obtain the Lie algebra isomorphism $\mathfrak{aff}^s(n,\mathbb{R}) \cong \mathfrak{N}$, and we will identify $\mathfrak{aff}^s(n,\mathbb{R})$ and $\mathfrak{N}$. Considering $g^F$ and $C^A$ at $(I_n,0)$, we obtain the inner product and the symmetric tensor of type $(0,3)$ on $\mathfrak{N}$. For simplicity of notation, we write $g^F$ and $C^A$ instead of $g^F_{(I_n,0)}$ and $C^A_{(I_n,0)}$.

\begin{lemma}\label{lem:ring}
    For $(U,0), (V,0), (W,0), (0,u), (0,v)\in \mathfrak{N}$, {the following identities hold}$:$
    \begin{align}
        g^F((0,u),(0,v))&=u^Tv, \label{eq:uv} \\
        g^F((U,0),(V,0))&=\operatorname{tr}(UV)+\operatorname{tr}(UV^T), \label{eq:UV}\\
        C^A((U,0),(0,u),(0,v))&=u^T(U+U^T)v, \label{eq:Uuv}\\
        C^A((U,0),(V,0),(W,0))&=\operatorname{tr}((U+U^T)(V+V^T)(W+W^T)) \label{eq:UVW}
    \end{align}
\end{lemma}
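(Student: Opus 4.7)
The plan is to reduce the identities to the coordinate formulas of Lemma \ref{lem:g^F} at the base point $(\Sigma, \mu) = (I_n, 0)$, by using the Lie algebra identification via $d\phi$ given in \eqref{eq:difisom}. Concretely, under $d\phi_{(I_n,0)}$, an element $(U, u) \in \mathfrak{aff}^s(n, \R) \cong \mathfrak{N}$ is sent to the tangent vector in $T_{(I_n, 0)} \N$ whose $\Sigma$-component is the symmetric matrix $U + U^T$ and whose $\mu$-component is $u$. Since $g^F$ and $C^A$ on $\mathfrak{N}$ are by definition their values at $(I_n, 0)$ pulled back through this identification, each of the four identities becomes a direct substitution into \eqref{eq:gF} and the corresponding formulas of Lemma \ref{lem:g^F} with $\Sigma = I_n$.

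For \eqref{eq:uv}, the vectors $(0, u), (0, v)$ correspond to pure $\mu$-direction tangent vectors $u, v$, so $g^F((0,u),(0,v)) = u^T I_n^{-1} v = u^T v$. For \eqref{eq:UV}, the vectors $(U, 0), (V, 0)$ correspond to pure $\Sigma$-direction tangent vectors $U + U^T, V + V^T$, so Lemma \ref{lem:g^F} gives
\begin{equation*}
g^F((U,0),(V,0)) = \tfrac{1}{2}\operatorname{tr}\bigl((U+U^T)(V+V^T)\bigr).
\end{equation*}
Expanding the product and using the cyclic property $\operatorname{tr}(AB)=\operatorname{tr}(BA)$ together with $\operatorname{tr}(X) = \operatorname{tr}(X^T)$ to identify $\operatorname{tr}(U^T V) = \operatorname{tr}(UV^T)$ and $\operatorname{tr}(U^T V^T) = \operatorname{tr}(UV)$, the four resulting terms collapse to $\operatorname{tr}(UV) + \operatorname{tr}(UV^T)$, proving \eqref{eq:UV}.

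For \eqref{eq:Uuv} and \eqref{eq:UVW}, the same substitution is applied to the Amari--Chentsov cubic form. The mixed entry $(U, 0)$ becomes $U + U^T$ in the $\Sigma$-slot, while $(0, u)$ and $(0, v)$ remain $u$ and $v$ in the $\mu$-slots, so the second cubic-form identity in Lemma \ref{lem:g^F} evaluated at $\Sigma = I_n$ gives directly $C^A((U,0),(0,u),(0,v)) = u^T (U+U^T) v$. For \eqref{eq:UVW}, plugging $U+U^T$, $V+V^T$, $W+W^T$ into the fully symmetric formula $C^A_{(I_n, 0)}(X, Y, Z) = \operatorname{tr}(XYZ)$ yields the claimed trace identity with no further manipulation.

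There is no real obstacle: the only nontrivial step is the trace algebra in \eqref{eq:UV}, which I expect to occupy at most two or three lines. The content of the lemma is really that the differential formula \eqref{eq:difisom} converts the symmetric-matrix coordinate expressions of Lemma \ref{lem:g^F} into expressions involving the Lie algebra variable $U$ (which is not itself symmetric) via the symmetrization $U \mapsto U + U^T$.
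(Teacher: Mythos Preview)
Your proposal is correct and follows essentially the same approach as the paper's proof: both apply Lemma \ref{lem:g^F} at the base point $(I_n,0)$ via the identification \eqref{eq:difisom}, and the only computation with any content is the trace simplification for \eqref{eq:UV}, which you handle just as the paper does.
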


\begin{proof}
    From the Lemma \ref{lem:g^F} and the equation \eqref{eq:difisom}, we can see that
    \begin{align*}
        g^F((0,u),(0,v))=g^F_{(I_n,0)}(u,v)=u^TI_n^{-1}v=u^Tv,
    \end{align*}
    and
    \begin{align*}
        g^F((U,0),(V,0))=g^F_{(I_n,0)}(U+U^T,V+V^T)&=\frac{1}{2}\operatorname{tr}(I_n^{-1}(U+U^T)I_n^{-1}(V+V^T) \\
        &=\operatorname{tr}(UV)+\operatorname{tr}(UV^T),
    \end{align*}
    and then we obtain the identities \eqref{eq:uv} and \eqref{eq:UV}. By a similar computing, we also obtain the identities \eqref{eq:Uuv} and \eqref{eq:UVW}.
\end{proof}
We define a set of indices $I$ by
\begin{align}
    I=\{ i \mid 1\le i \le n,i\in \mathbb{N}\}\cup \{(i,j) \mid 1\le i \le j \le n, i, j\in \mathbb{N}\}.
\end{align}  
Moreover, we define $(n+1)\times (n+1)$ matrices $e_i, e_{ij},$ and $e_{ii}$ $(i<j)$ by setting
\begin{equation}\label{eq:basis}
\left\{
\begin{array}{l}
    e_i:(i,n+1) \textrm{-th entry is $1$ and $0$ otherwise,}\\
    e_{ij}:(i,j) \textrm{-th entry is $1$ and $0$ otherwise,}\\
    e_{ii}:(i,i) \textrm{-th entry is }\frac{1}{\sqrt{2}}\textrm{ and $0$ otherwise.}
\end{array}
\right.
\end{equation}
\begin{definition}
    For $i\in I$ and $(i,j) \in I$, we refer to the $(n+1)\times (n+1)$ matrix $e_i$ as the \textit{mean-direction vector} and the $(n+1)\times (n+1)$ matrix $e_{ij}$ as the \textit{covariance-direction vector}.
\end{definition}
For simplicity of notation, we will denote $e_{\alpha}$ by $e_{ij}$ when $\alpha=(i,j)$.
Clearly, the matrices $\{e_{\alpha}\}_{\alpha\in I}$ are linearly independent and generate the Lie algebra $\mathfrak{N}$, and thus these are the basis of the Lie algebra $\mathfrak{N}$. 
From Lemmas \ref{lem:g^F} and \ref{lem:ring}, it is easy to compute that
\begin{align*}
    g^F(e_i,e_j)&=\delta_{ij}, \\
    g^F(e_i,e_{jk})&=0, \\
    g^F(e_{ii},e_{jj})&=2\operatorname{tr}(e_{ii}e_{jj})=\delta_{ij}, \\
    g^F(e_{ij},e_{kk})&=2\operatorname{tr}(e_{ij}e_{kk})=\sqrt{2}\delta_{ik}\delta_{jk}=0, \quad (i<j), \\
    g^F(e_{ij},e_{kl})&=\operatorname{tr}(e_{ij}e_{kl})+\operatorname{tr}(e_{ij}e_{kl}^T) \\
    &=\delta_{il}\delta_{jk}+\delta_{ik}\delta_{jl}
    =\delta_{ik}\delta_{jl}, \quad (i<j ~\textrm{and}~ k<l),
\end{align*}
where $\delta_{ij}$ is Kronecker delta, and thus the matrices $\{e_{\alpha}\}_{\alpha \in I}$ are the orthonormal basis of the Lie algebra $\mathfrak{N}$. We now compute the value of the cubic form $C^A$ about the orthonormal basis. We define $a_{ij}\in \{1,\frac{1}{\sqrt{2}}\}$ by $a_{ij}=1-\delta_{ij}+\frac{1}{\sqrt{2}}\delta_{ij}$.
Then using again Lemma \ref{lem:ring}, we can compute 
\begin{align*}
    C^A(e_{ij},e_k,e_l)&=e_k^Te_{ij}^Te_{l}+e_k^Te_{ij}e_l \\
    &=a_{ij}(\delta_{jk}\delta_{il}+\delta_{ik}\delta_{jl}),
\end{align*}
and thus $C^A(e_{ij},e_k,e_l)~ (i\leq j~\textrm{and}~k\leq l)$ is non-zero if $i=k$ and $j=l$, and zero otherwise. Moreover, we can compute
\begin{align*}
    C^A(e_{ij},e_{kl},e_{rs})&=\operatorname{tr}((e_{ij}^T+e_{ij})(e_{kl}^T+e_{kl})(e_{rs}^T+e_{rs}))\\
    &=a_{ij}a_{kl}a_{rs}(\delta_{il}\delta_{ks}\delta_{rj}+\delta_{il}\delta_{kr}\delta_{sj}+\delta_{jl}\delta_{ks}\delta_{ri}\\
    &+\delta_{ik}\delta_{ls}\delta_{rj}+\delta_{ik}\delta_{lr}\delta_{sj}+\delta_{jl}\delta_{kr}\delta_{si}+\delta_{jk}\delta_{ls}\delta_{ri}+\delta_{jk}\delta_{lr}\delta_{si}),
\end{align*}
and thus $C^A(e_{ij},e_{kl},e_{rs})$ is nonzero if $C^A(e_{ij},e_{kl},e_{rs})$ is equal to
\[C^A(e_{ii},e_{ii},e_{ii}), \quad C^A(e_{ii},e_{ij},e_{ij}),\quad 
C^A(e_{jj},e_{ij},e_{ij}) \quad \mbox{or}\quad C^A(e_{ij},e_{jk},e_{ik}),\]
and zero otherwise. By the above and Lemma \ref{lem:g^F}, the combination of indices, where the Amari-Chentsov cubic form $C^A$ is non-zero, are
\begin{equation}\label{cubic}
\left\{
    \begin{array}{l}
        C^A(e_{ii},e_i,e_i)=\sqrt{2}, \quad C^A(e_{ij},e_i,e_j)=1,\quad
        C^A(e_{ii},e_{ii},e_{ii})=2\sqrt{2},\\
        C^A(e_{ii},e_{ij},e_{ij})=C^A(e_{jj},e_{ij},e_{ij})=\sqrt{2},\quad  C^A(e_{ij},e_{jk},e_{ik})=1.
    \end{array}
\right.
\end{equation}

\section{A characterization of the alpha-connections on the statistical manifold of normal distributions} \label{sec:main}

In this section, we will prove the main theorem of this paper, i.e., we will give a characterization of the Amari-Chentsov $\alpha$-connections on the statistical manifold of normal distributions.
For a Lie algebra $\mathfrak{g}$ with inner product $\langle \;,\;\rangle$, we define a bilinear map $U:\mathfrak{g}\times \mathfrak{g}\to\mathfrak{g}$ as follows (\cite[Chapter X.3]{KN2});
\begin{align}
    2\langle U(X,Y),Z\rangle=\langle [Z,X],Y\rangle+\langle X,[Z,Y]\rangle\label{eq:U}.
\end{align}
It is easy to see that the bilinear map $U$ is symmetric.
\begin{lemma}[Chapter X.3 in \cite{KN2}]\label{lem:levi}
   Let $G$ be a Lie group with a left invariant metric $\langle \;,\;\rangle$. Then the Levi-Civita connection $\levi$ for $\langle \;,\;\rangle$ is given by
    \begin{align}
    \levi_XY=\frac{1}{2}[X,Y]+U(X,Y)  \quad \textit{for}~X,Y\in \mathfrak{g}.\label{eq:left}
    \end{align}
\end{lemma}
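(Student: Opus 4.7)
The plan is standard: verify that the formula $\nabla_X Y := \tfrac12 [X, Y] + U(X, Y)$, defined first on left-invariant vector fields and extended to arbitrary vector fields by $C^\infty$-linearity in $X$ and the Leibniz rule in $Y$, defines a torsion-free metric connection. By the uniqueness half of the fundamental theorem of Riemannian geometry, it must then coincide with $\levi$.

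Torsion-freeness is immediate from the symmetry of $U$ (noted just above the lemma) and the skew-symmetry of the bracket: $\nabla_X Y - \nabla_Y X = \tfrac12[X,Y] - \tfrac12[Y,X] + U(X,Y) - U(Y,X) = [X,Y]$. So the only real content is metric compatibility. Since $\langle\,\cdot\,,\,\cdot\,\rangle$ is left-invariant, the function $\langle Y, Z\rangle$ is constant whenever $Y, Z$ are left-invariant, so $X\langle Y, Z\rangle = 0$, and I must check that $\langle \nabla_X Y, Z\rangle + \langle Y, \nabla_X Z\rangle = 0$ on such fields. I would expand both inner products using the definition of $\nabla$ and then apply the defining identity \eqref{eq:U} for $U$ twice, once to $\langle U(X,Y), Z\rangle$ and once to $\langle U(X,Z), Y\rangle$. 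The four resulting terms pair off neatly: those involving $\langle X, [Z,Y]\rangle$ and $\langle X, [Y,Z]\rangle$ cancel by the skew-symmetry of the bracket, while the remaining two combine with the explicit bracket contributions $\tfrac12\langle[X,Y],Z\rangle$ and $\tfrac12\langle Y,[X,Z]\rangle$ to give zero.

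I do not anticipate a real obstacle here: the bilinear map $U$ is designed precisely so that these cancellations occur, and the identity is essentially the Koszul formula rewritten in the left-invariant setting. The only routine point is the extension from left-invariant frames to arbitrary vector fields, which causes no difficulty since left-invariant fields pointwise trivialize $TG$ and the right-hand side of the formula is tensorial in $X$ and a derivation in $Y$ in the required way.
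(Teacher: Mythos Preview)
Your argument is correct: the torsion computation and the metric-compatibility check are exactly as you describe, and the six terms cancel in pairs just as you outlined. The paper itself offers no proof of this lemma at all---it merely cites Chapter~X.3 of \cite{KN2}---so there is nothing to compare against; your verification is the standard one found in that reference.
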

%
From the form of the orthonormal basis $\{e_{\alpha}\}_{\alpha\in I}$, for $i<j<k$, the bracket is
\begin{align*}
    [e_i,e_{ii}]&=-\frac{1}{\sqrt{2}}e_i,    &[e_j,e_{ij}]&=-e_i,&
    [e_{ii},e_{ij}]&=\frac{1}{\sqrt{2}}e_{ij},\\  
    [e_{ij},e_{jj}]&=\frac{1}{\sqrt{2}}e_{ij},&  [e_{ij},e_{jk}]&=e_{ik},
\end{align*}
and zero for other index patterns.
\begin{lemma} \label{lem:U}
    For $i<j<k$, the following holds, and the other index patterns are zero;
    \begin{align*}
    U(e_i,e_i)&=\frac{1}{\sqrt{2}}e_{ii},&U(e_i,e_{ii})&=-\frac{1}{2\sqrt{2}}e_i,\\
    U(e_i,e_j)&=\frac{1}{2}e_{ij},&U(e_i,e_{ij})&=-\frac{1}{2}e_j, \\
    U(e_{ii},e_{ij})&=-\frac{1}{2\sqrt{2}}e_{ij},&U(e_{ij},e_{ij})&=\frac{1}{\sqrt{2}}e_{ii}-\frac{1}{\sqrt{2}}e_{jj},\\
    U(e_{ij},e_{jj})&=\frac{1}{2\sqrt{2}}e_{ij},&U(e_{ik},e_{jk})&=\frac{1}{2}e_{ij},\\
    U(e_{ij},e_{ik})&=-\frac{1}{2}e_{jk}.
\end{align*}
\end{lemma}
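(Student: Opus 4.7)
The plan is to read off each value $U(e_\alpha, e_\beta)$ directly from the defining formula \eqref{eq:U} using the bracket table displayed just before the lemma. Since $\{e_\gamma\}_{\gamma \in I}$ is orthonormal with respect to $g^F$, we have
\[
U(e_\alpha, e_\beta) \;=\; \frac{1}{2}\sum_{\gamma \in I}\Bigl(\langle [e_\gamma,e_\alpha], e_\beta\rangle + \langle e_\alpha, [e_\gamma,e_\beta]\rangle\Bigr)\, e_\gamma ,
\]
so the task reduces to identifying, for each pair $(\alpha,\beta)$, the indices $\gamma$ for which either $[e_\gamma, e_\alpha]$ has a non-zero $e_\beta$-component or $[e_\gamma, e_\beta]$ has a non-zero $e_\alpha$-component. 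Because the non-trivial brackets are few and each produces a single basis element with coefficient in $\{\pm 1, \pm\tfrac{1}{\sqrt 2}\}$, the admissible values of $\gamma$ are pinned down by rigid index-matching conditions.

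Using the symmetry $U(X,Y)=U(Y,X)$, I would stratify the unordered pairs $\{\alpha,\beta\}$ by the types of the two indices: mean-mean, mean-covariance (splitting by whether the covariance vector is diagonal $e_{ii}$ or off-diagonal $e_{ij}$ with $i<j$), and covariance-covariance (splitting further by the number of indices the two covariance vectors share). As a representative case, consider $U(e_i, e_{ii})$. No non-trivial bracket of any $e_\gamma$ with $e_i$ contains an $e_{ii}$-component, so the first summand vanishes for every $\gamma$. On the other hand, the only bracket involving $e_{ii}$ that yields a multiple of $e_i$ is $[e_i,e_{ii}] = -\tfrac{1}{\sqrt 2}e_i$, so only $\gamma=i$ contributes and
\[
U(e_i,e_{ii}) \;=\; \tfrac{1}{2}\bigl\langle e_i,\,-\tfrac{1}{\sqrt 2}e_i\bigr\rangle\, e_i \;=\; -\tfrac{1}{2\sqrt 2}\,e_i,
\]
matching the stated value. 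The remaining entries follow from analogous line-by-line inspection, and the \emph{other index patterns are zero} clause amounts to the observation that no $\gamma$ activates either summand.

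The main difficulty is not conceptual but organizational: one must carefully treat the index conventions ($i<j$ for off-diagonal covariance indices, the $\tfrac{1}{\sqrt 2}$ normalisation carried by the diagonal vectors $e_{ii}$, and the distinct sub-cases of covariance-covariance pairs with $0$, $1$, or $2$ shared indices) and confirm that every potentially contributing $\gamma$ has been enumerated. Once the cases are organised, each individual calculation is nothing more than pairing a structure constant with an orthonormal basis element, so no further technical input is required.
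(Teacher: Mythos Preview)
Your proposal is correct and matches the paper's own proof essentially verbatim: the paper also expands $2g^F(U(e_\alpha,e_\beta),e_\bullet)$ via \eqref{eq:U}, uses orthonormality of $\{e_\gamma\}_{\gamma\in I}$ together with the bracket table to isolate the contributing $e_\bullet$, works one representative case explicitly (they choose $U(e_i,e_i)$ rather than your $U(e_i,e_{ii})$), and then declares the remaining patterns analogous. There is no substantive difference in method or organisation.
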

\begin{proof}
    First, we obtain $U(e_i,e_i)$. From the equation \eqref{eq:U}, 
\begin{align*}
2g^F(U(e_i,e_i),e_{\bullet})&=g^F([e_{\bullet},e_i],e_i)+g^F([e_{\bullet},e_i],e_i) \\
&=2g^F([e_{\bullet},e_i],e_i)
\end{align*}
and $\{e_{\alpha}\}_{\alpha\in I}$ is an orthonormal basis, and from the result of the bracket product, the right-hand side is not zero only when $e_{\bullet}=e_{ii}$. Therefore, we obtain
\begin{align*}
    U(e_i,e_i)=\frac{1}{\sqrt{2}}e_{ii}.
\end{align*}
The other patterns are also calculated in the same way.
\end{proof}
From Lemmas \ref{lem:levi} and \ref{lem:U}, we obtain the following proposition.
\begin{proposition} \label{prp:Levicivita}
    For indices $1 \leqq i< j< k\leqq n$, the Levi-Civita connection $\levi$ can be computed as follows, and the other index patterns are zero;
    
    \begin{align}\label{eq:Levi-Civita}
\left\{
    \begin{array}{llll}
\levi_{e_i}e_i=\frac{1}{\sqrt{2}}e_{ii}, & \levi_{e_i}e_j =\levi_{e_j}e_i=\frac{1}{2}e_{ij}, & \levi_{e_i}e_{ii}=-\frac{1}{\sqrt{2}}e_i, \\ \levi_{e_i}e_{ij}=-\frac{1}{2}e_j, & \levi_{e_j}e_{ij}=-\frac{1}{2}e_i, & \levi_{e_{ij}}e_i=-\frac{1}{2}e_j, \\ \levi_{e_{ij}}e_j=\frac{1}{2}e_i, & \levi_{e_{ij}}e_{ii}=-\frac{1}{\sqrt{2}}e_{ij}, & \levi_{e_{ij}}e_{jj}=\frac{1}{\sqrt{2}}e_{ij}, \\
\levi_{e_{ij}}e_{jk}=\frac{1}{2}e_{ik}, & \levi_{e_{jk}}e_{ij}=-\frac{1}{2}e_{ik}, & \levi_{e_{ij}}e_{ik}=\levi_{e_{ik}}e_{ij}=-\frac{1}{2}e_{jk}, \\ 
\levi_{e_{ik}}e_{jk}=\levi_{e_{jk}}e_{ik}=\frac{1}{2}e_{ij} & \levi_{e_{ij}}e_{ij}=\frac{1}{\sqrt{2}}e_{ii}-\frac{1}{\sqrt{2}}e_{jj}. \\
   \end{array}
\right.
\end{align}

\end{proposition}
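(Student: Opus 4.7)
The plan is to apply Lemma \ref{lem:levi}, which writes the Levi-Civita connection as
\begin{align*}
\levi_{e_\alpha} e_\beta = \tfrac{1}{2}[e_\alpha, e_\beta] + U(e_\alpha, e_\beta),
\end{align*}
and then to read off each entry of \eqref{eq:Levi-Civita} by substituting the bracket relations listed immediately before Lemma \ref{lem:U} together with the values of $U$ computed in Lemma \ref{lem:U}. So the proof is essentially a finite tabulation: for every pair $(e_\alpha, e_\beta)$ in the orthonormal basis $\{e_\alpha\}_{\alpha \in I}$ I add the half-bracket to the corresponding $U$-value.

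First I would partition the pairs into classes according to whether each factor is a mean-direction vector $e_i$ or a covariance-direction vector $e_{ij}$ (distinguishing the diagonal case $i = j$ from the off-diagonal case $i < j$ because of the $1/\sqrt{2}$ normalization in \eqref{eq:basis}). For each class I record which index patterns make either the bracket or $U$ nonzero; any pattern where both vanish gives $\levi_{e_\alpha} e_\beta = 0$, which accounts for the ``other index patterns are zero'' assertion. Torsion-freeness of $\levi$, equivalent to $\levi_X Y - \levi_Y X = [X, Y]$, would let me halve the computation by recovering $\levi_{e_\beta} e_\alpha$ from $\levi_{e_\alpha} e_\beta$ and the bracket.

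For illustration: $\levi_{e_i}e_i$ has vanishing bracket and $U(e_i, e_i) = \tfrac{1}{\sqrt 2} e_{ii}$, yielding the first entry of \eqref{eq:Levi-Civita}. The entry $\levi_{e_i} e_{ii}$ combines $\tfrac{1}{2}[e_i, e_{ii}] = -\tfrac{1}{2\sqrt 2} e_i$ with $U(e_i, e_{ii}) = -\tfrac{1}{2\sqrt 2} e_i$ to give $-\tfrac{1}{\sqrt 2} e_i$. The most delicate case is $\levi_{e_{ij}} e_{ij}$: the bracket vanishes and $U(e_{ij}, e_{ij}) = \tfrac{1}{\sqrt 2}(e_{ii} - e_{jj})$ supplies the stated value. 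For the triple-index entries $\levi_{e_{ij}} e_{jk}$ with $i < j < k$, the bracket gives $[e_{ij}, e_{jk}] = e_{ik}$ while $U$ vanishes on this pair, so $\levi_{e_{ij}} e_{jk} = \tfrac{1}{2} e_{ik}$; and for $\levi_{e_{ij}} e_{ik}$ the bracket is zero while $U(e_{ij}, e_{ik}) = -\tfrac{1}{2} e_{jk}$.

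The main obstacle is not conceptual but organizational: one must keep careful track of the $a_{ij} = 1 - \delta_{ij} + \tfrac{1}{\sqrt 2}\delta_{ij}$ factors arising from the different normalizations of $e_{ii}$ and $e_{ij}$ ($i<j$), and exhaustively verify that every index pattern not appearing in \eqref{eq:Levi-Civita} has both $\tfrac12[e_\alpha, e_\beta]$ and $U(e_\alpha, e_\beta)$ equal to zero. Once the table is assembled this way, Proposition \ref{prp:Levicivita} follows by direct comparison with the formulas of Lemma \ref{lem:U} and the bracket list, and then extended to the full connection by left-invariance.
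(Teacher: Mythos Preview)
Your proposal is correct and follows exactly the approach the paper takes: the paper simply states that the proposition follows from Lemma~\ref{lem:levi} and Lemma~\ref{lem:U}, and your tabulation of $\tfrac12[e_\alpha,e_\beta]+U(e_\alpha,e_\beta)$ case by case is precisely how that deduction is carried out. Your sample computations (including the three-index cases) match the bracket list and the $U$-values, so there is nothing to add.
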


With these preparations, we prove the main theorem of this paper.

\begin{theorem} \label{thm:main}
    Let $\mathcal{N}$ be the family of $n$-variate normal distributions with
    the Lie group structure via {\rm Proposition  \ref{Prop:GQ}}, $g^F$ the left invariant Fisher metric given by \eqref{eq:gF}, $\nabla$ a torsion free affine connection such that $(\mathcal{N},g^F,\nabla)$ is a statistical Lie group, $C$ the cubic form and $K$ the difference tensor with respect
to $\nabla$.     Then the following properties are mutually equivalent:
    \begin{enumerate}
        \item The affine connection $\nabla$ is an Amari-Chentsov $\alpha$-connection.
        \item The statistical manifold $(\N, g^F, \nabla)$ is conjugate symmetric.
        \item $\nabla C$ is totally symmetric.
        \item $\levi C$ is totally symmetric.
        \item $\levi K$ is totally symmetric.
    \end{enumerate}
Here, $\levi$ is the Levi-Civita connection of $g^F$.
\end{theorem}

\begin{proof}
    First, note that the equivalence of (2), (3), (4), and (5) holds 
    by Lemma 1 in \cite{BNS}.\\ 
    $(1)\Rightarrow(2)$: 
    Since $\N$ is an exponential family, $R^{(1)}=R^{(-1)}=0$ holds. Therefore, the statistical manifold $(\N, g^F, \nabla)$ is conjugate symmetric from Lemma \ref{lem:consym}. \\
    $(5)\Rightarrow(1)$:
    For $\alpha, \beta, \gamma \in I$, we define a family $\{K_{\alpha \beta}^{\gamma}\}$ of constants by
    \begin{align*}
    K(e_{\alpha},e_{\beta})=\sum_{\gamma\in I}K_{\alpha\beta}^{\gamma}e_{\gamma}.
    \end{align*}
    From the Lemma \ref{Lem:CK} $(1)$ and the fact that $\{e_{\alpha}\}_{\alpha\in I}$ is an orthonormal basis, we have
    \begin{align*} C(e_{\alpha},e_{\beta},e_{\gamma})=-2g(K(e_{\alpha},e_{\beta}),e_{\gamma}) =-2K_{\alpha\beta}^{\gamma},
    \end{align*}
    and from the symmetry of the cubic form $C$, the symmetry of $K$ follows:
\begin{align*}K_{\alpha\beta}^{\gamma}=K_{\beta\alpha}^{\gamma}=K_{\beta\gamma}^{\alpha}=K_{\gamma\beta}^{\alpha}=K_{\gamma\alpha}^{\beta}=K_{\alpha\gamma}^{\beta}.
    \end{align*}
    From now on, we will use this symmetry of $K$ without mentioning it.
    We will show that all but six combinations 
    \begin{equation}\label{eq:six}
    K_{(i,i)(i,i)}^{(i,i)}, \quad K_{(i,j)(i,j)}^{(i,i)},\quad K_{(i,j)(i,j)}^{(j,j)},\quad K_{(i,j)(j,k)}^{(i,k)},\quad K_{i(i,i)}^i,\quad K_{j(i,j)}^i
    \end{equation}
    are zero. Then, we will show that the above combinations can be written in terms of a single real parameter $p$, which gives the Amari-Chentsov $\alpha$-connection. 

\begin{remarkstar}
    We give a graphical explanation of whether $K_{\alpha\beta}^{\gamma}$ is zero or not. For any $i\leq j$, let white vertices be indices $i$ and {black vertices indices} $(i,j)$, and if there are the same characters between indices, the corresponding vertices are connected by edges. Here, we assume that once the characters are chosen, they are no longer chosen. Then, a graph can be obtained for each coefficient $K_{\alpha\beta}^{\gamma}$. 
    If the white vertices have one edge, the black vertices have two edges and the graph is connected, the corresponding coefficient is not equal to zero, and otherwise zero.
      Figure \ref{fig:nonzero} shows the graph where the corresponding coefficients are not equal to zero, and Figure \ref{fig:zero} shows the graph where the corresponding coefficients are equal to zero. Since the graph of Figure \ref{fig:iijj} is not connected and the graph of Figure \ref{fig:iijij} has a black vertex which has one edge, then the corresponding coefficients are equal to zero. Note that this is only an intuitive explanation and is not to be used for the proof.
    
    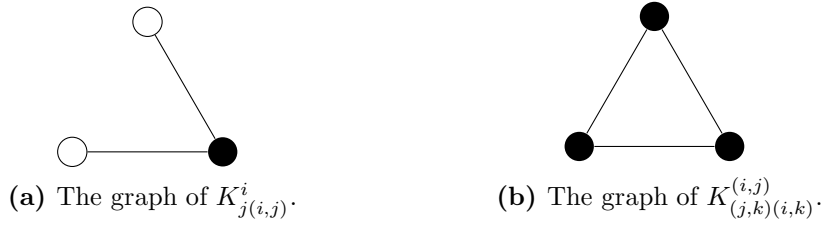
\begin{figure}[htbp]
        \centering
        \begin{subfigure}{0.4\columnwidth}
        \centering
        \begin{tikzpicture}
        \node[draw,circle](a) at (0,0) {};
        \node[circle,fill=black](b) at (2,0) {};
        \node[draw,circle](c) at (1,1.73){};
        \draw (a)--(b)--(c);
        \end{tikzpicture}
        \label{fig:ijij}
        \subcaption{The graph of $K_{j(i,j)}^i$.}
        \end{subfigure}
        \begin{subfigure}{0.4\columnwidth}
        \centering
        \begin{tikzpicture}
        \node[circle,fill=black](a) at (0,0) {};
        \node[circle,fill=black](b) at (2,0) {};
        \node[circle,fill=black](c) at (1,1.73){};
        \draw (a)--(b)--(c)--(a);
        \end{tikzpicture}
        \label{fig:ijjkik}
        \subcaption{The graph of $K_{(j,k)(i,k)}^{(i,j)}$.}
        \end{subfigure}
        
    \caption{Examples of non-zero coefficients.}
        \label{fig:nonzero}
    \end{figure}%

     \begin{figure}[htbp]
        \centering
        \begin{subfigure}{0.4\columnwidth}
        \centering
        \begin{tikzpicture}
        \node[draw,circle](a) at (0,0) {};
        \node[circle,fill=black](b) at (2,0) {};
        \node[draw,circle](c) at (1,1.73){};
        \draw (a)--(c) ;
        \draw (b) .. controls (1,0) and (1.5,0.86)  ..(b);
        \end{tikzpicture}
        \caption{The graph of $K_{i(j,j)}^i$.}
        \label{fig:iijj}
        \end{subfigure}
        \begin{subfigure}{0.4\columnwidth}
        \centering
        \begin{tikzpicture}
        \node[circle,fill=black](a) at (0,0) {};
        \node[circle,fill=black](b) at (2,0) {};
        \node[draw,circle](c) at (1,1.73){};
        \draw (a)--(b)--(c);
        \end{tikzpicture}
        \caption{The graph of $K_{(i,j)(i,j)}^{i}$.}
        \label{fig:iijij}
        \end{subfigure}
        
    \caption{Examples of coefficients which are equal to zero.}
        \label{fig:zero}
    \end{figure}
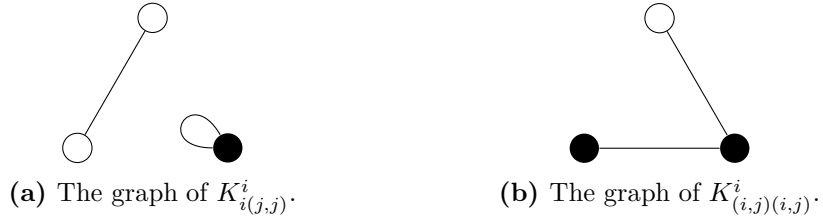%

    The actual proof of Theorem \ref{thm:main} will be divided {into} 
    the five steps:
    \begin{equation*}
\left\{
\begin{array}{l}
 \textbf{Step 1:} \quad \textrm{We show $K_{(j,k)(l,m)}^i=0$}. \\
 \textbf{Step 2:} \quad \textrm{We show $K^i_{jk}=0$}. \\
 \textbf{Step 3:} \quad \textrm{We determine $K_{(k,l)(m,s)}^{(i, j)}$}. \\
 \textbf{Step 4:} \quad \textrm{We determine $K_{ j (k,l)}^i$}. \\
 \textbf{Final Step:} \quad \textrm{We determine all $K$ in \eqref{eq:six}}. 
\end{array}
\right.
\end{equation*}
 Note that in each step, we need to use the previous steps.
    
\end{remarkstar}
    \textbf{Step 1 $(K_{(j,k)(l,m)}^i)$:} We will show that the coefficients are equal to zero if the indices are $K_{(j,k)(l,m)}^i$. First, we will show that the coefficients are equal to zero if the indices are $K_{(j,j)(k,k)}^i$.
    From the condition $(5)$, we have
    \begin{align}
    (\levi_{e_i}K)(e_{jj},e_{kk})=(\levi_{e_{jj}}K)(e_i,e_{kk}). \label{ijjkk}
    \end{align}
    From Proposition \ref{prp:Levicivita}, the following statement follows;
    \begin{gather}\label{eq:iideri}
    \mbox{all~the~derivatives~in~$\levi_{e_{ii}}$-directions are zero}.
    \end{gather}
    From now on, we will use the condition (5) and Proposition \ref{prp:Levicivita} without mentioning it. Then, from the equation \eqref{ijjkk} and \eqref{eq:iideri}, we compute
    \begin{align*}
    0=(\levi_{e_{jj}}K)(e_i,e_{kk})&=(\levi_{e_i}K)(e_{jj},e_{kk})\\
    &=\levi_{e_i}(K(e_{jj},e_{kk}))-K(\levi_{e_i}e_{jj},e_{kk})-K(e_{jj},\levi_{e_i}e_{kk}) \\
    &=\sum_{\alpha\in  I}K_{(j,j)(k,k)}^{\alpha}\levi_{e_i}e_{\alpha}-K(\levi_{e_i}e_{jj},e_{kk})-K(e_{jj},\levi_{e_i}e_{kk}),
    \end{align*}
    and we can see that
    \begin{align}
    \sum_{\alpha\in  I}K_{(j,j)(k,k)}^{\alpha}\levi_{e_i}e_{\alpha}=K(\levi_{e_i}e_{jj},e_{kk})+K(e_{jj},\levi_{e_i}e_{kk}).\label{levi_jjkk}
    \end{align}
    If $i\neq j$ and $i\neq k$, $\levi_{e_i}e_{jj}=0$ and $\levi_{e_i}e_{kk}=0$ hold, and thus the right-hand side of the equation \eqref{levi_jjkk} is equal to zero. Moreover, the $e_{ii}$ component of the left-hand side of the equation \eqref{levi_jjkk} is $\frac{1}{\sqrt{2}}K_{(j,j)(k,k)}^i$, and then
    \begin{align}
    K_{(j,j)(k,k)}^i=0 \quad (i\neq j~\textrm{and}~i\neq k)\label{K_jjkk^i}
    \end{align}
    holds. If $i=j$ and $i\neq k$, it follows from the equation \eqref{levi_jjkk} that
    \begin{align*}
    \sum_{\alpha\in  I}K_{(i,i)(k,k)}^{\alpha}\levi_{e_i}e_{\alpha}&=K(\levi_{e_i}e_{ii},e_{kk})+K(e_{ii},\levi_{e_i}e_{kk}) =-\frac{1}{\sqrt{2}}K(e_i,e_{kk}),
    \end{align*}
    and since the $e_{ii}$ component of the left-hand side is $\frac{1}{\sqrt{2}}K_{(i,i)(k,k)}^i$ and the $e_{ii}$ component of the right-hand side is $-\frac{1}{\sqrt{2}}K_{(i,i)(k,k)}^i$, we can see that
    \begin{align}
    K_{(i,i)(k,k)}^i=0 \quad (i\neq k).\label{K_iikk^i}
    \end{align}
    Moreover, if $i=j=k$, it follows from the equation \eqref{levi_jjkk} that
    \begin{align*}
    \sum_{\alpha\in  I}K_{(i,i)(i,i)}^{\alpha}\levi_{e_i}e_{\alpha}&=2K(\levi_{e_i}e_{ii},e_{ii}) =-\sqrt{2}K(e_i,e_{ii}),
    \end{align*}
    and since the $e_{ii}$ component of the left-hand side is $\frac{1}{\sqrt{2}}K_{(i,i)(i,i)}^i$ and the $e_{ii}$ component of the right-hand side is $-\sqrt{2}K_{(i,i)(i,i)}^i$, we can see that
    \begin{align}
    K_{(i,i)(i,i)}^i=0. \label{K_iiii^i}
    \end{align}
    Thus, from the equations \eqref{K_jjkk^i}, \eqref{K_iikk^i}, and \eqref{K_iiii^i}, for any indices $i,j$ and $k$, 
    \begin{align}
    K_{(j,j)(k,k)}^i=0 \label{jjkki}
    \end{align}
    holds. \\
    We next show that the coefficients are zero if the indices are $K_{(j,k)(l,l)}^i$. In a similar manner, from \eqref{eq:iideri}, we compute
    \begin{align*}
    0=(\levi_{e_{ll}}K)(e_{jk},e_i)&=(\levi_{e_i}K)(e_{jk},e_{ll}) \\
    &=\levi_{e_i}(K(e_{jk},e_{ll}))-K(\levi_{e_i}e_{jk},e_{ll})-K(e_{jk},\levi_{e_i}e_{ll}),
    \end{align*}
    and we can see that
    \begin{align}
    \levi_{e_i}(K(e_{jk},e_{ll}))=K(\levi_{e_i}e_{jk},e_{ll})+K(e_{jk},\levi_{e_i}e_{ll}). \label{levi_jkll}
    \end{align}
    If $i\neq l$, $\levi_{e_i}e_{ll}=0$, and then the second term of the right-hand side of the equation \eqref{levi_jkll} is equal to zero. Moreover, 
    \begin{gather}\label{parallel:ijk}
    \mbox{$\levi_{e_i}e_{jk}$ is equal to 0 or parallel to a mean-direction vector,}
    \end{gather}
    and then, from the equation \eqref{jjkki}, the $e_{ii}$ component of  the first term of the right-hand side  is equal to zero. In addition, the $e_{ii}$ component of the left-hand side is $\frac{1}{\sqrt{2}}K_{(j,k)(l,l)}^i$, and therefore, we obtain
    \begin{align}
    K_{(j,k)(l,l)}^i=0 \quad (i\neq l).\label{K_jkll^i}
    \end{align}
    If $i=l$, from the equation \eqref{levi_jkll}, we compute
    \begin{align*}
    \levi_{e_i}(K(e_{jk},e_{ii}))&=K(\levi_{e_i}e_{jk},e_{ii})+K(e_{jk},\levi_{e_i}e_{ii}) \\
    &=K(\levi_{e_i}e_{jk},e_{ii})-\frac{1}{\sqrt{2}}K(e_{jk},e_i).
    \end{align*}
    From the equation \eqref{jjkki} and \eqref{parallel:ijk}, the $e_{ii}$ component of the first term of the right-hand side is equal to zero. Moreover, the $e_{ii}$ component of the left-hand side is $\frac{1}{\sqrt{2}}K_{(j,k)(i,i)}^i$ and the $e_{ii}$ component of the right-hand side is $-\frac{1}{\sqrt{2}}K_{(j,k)(i,i)}^i$, and therefore, we obtain
    \begin{align}
    K_{(j,k)(i,i)}^i=0.\label{K_jkii^i}
    \end{align}
    Thus, from the equation \eqref{K_jkll^i} and \eqref{K_jkii^i}, for any indices $i,j,k$ and $l$, it follows that
    \begin{align}
    K_{(j,k)(l,l)}^i=0. \label{jklli}
    \end{align}
    Then we finally show that the coefficients are zero if the indices are $K_{(j,k)(l,m)}^i$. Since $K_{(j,k)(l,m)}^i=0$ for $j=k$ or $l=m$ from the equation \eqref{jklli}, we suppose that $j\neq k$ and $l\neq m$. From the condition $(5)$, we have 
    \begin{align*}
    (\levi_{e_{jk}}K)(e_i,e_{lm})=(\levi_{e_i}K)(e_{jk},e_{lm}).
    \end{align*}
    We compute
    \begin{align*}
        (\textrm{LHS})=\levi_{e_{jk}}(K(e_i,e_{lm}))-K(\levi_{e_{jk}}e_i,e_{lm})-K(e_i,\levi_{e_{jk}}e_{lm}).
    \end{align*}
    Since 
    \begin{gather}
        \mbox{$\levi_{e_{jk}}e_i$ is parallel to a mean-direction vector,}\label{parallel:jki}
    \end{gather}
     \begin{gather}
         \mbox{$\levi_{e_{jk}}e_{lm}$ is parallel to a covariance-direction vector,} \label{parallel:jklm}
     \end{gather}
     and from the equation \eqref{jklli}, the $e_{jj}$ and $e_{kk}$ component  of the second and third term are equal to zero. Then, the $e_{jj}$ component of the first term is $\frac{1}{\sqrt{2}}K_{(j,k)(l,m)}^i$ and the $e_{kk}$ component of the first term is $-\frac{1}{\sqrt{2}}K_{(j,k)(l,m)}^i$. We also compute
    \begin{align*}
    (\textrm{RHS})=\levi_{e_i}(K(e_{jk},e_{lm}))-K(\levi_{e_i}e_{jk},e_{lm})-K(e_{jk},\levi_{e_i}e_{lm}),
    \end{align*}
    and from \eqref{parallel:ijk} and the equation \eqref{jklli}, the $e_{jj}$ and $e_{kk}$ components of the second and third terms are zero. Here, since $j$ and $k$ are different, $i$ is at least different either $j$ or $k$, so either $e_{jj}$ or $e_{kk}$ component of the first term is zero. Therefore, compare the $e_{jj}$ and $e_{kk}$ components of both sides, we can see that
    \begin{align}
    K_{(j,k)(l,m)}^i=0 \quad (j\neq k~\textrm{and} ~l\neq m).\label{jneqklmi}
    \end{align}
    Thus, from the equations \eqref{jklli} and \eqref{jneqklmi}, for any indices $i,j,k,l$ and $m$, we obtain
    \begin{align}
    K_{(j,k)(l,m)}^i=0. \label{jklmi}
    \end{align}

    \textbf{Step 2  $(K_{jk}^i)$:} We will show that the coefficients are also zero if the indices are $K_{jk}^i$. First, we will show that $K_{jk}^i=0$ for the case $i<j<k$. From the condition $(5)$, we have
    \begin{align*}
        (\levi_{e_i}K)(e_j,e_k)=(\levi_{e_j}K)(e_i,e_k),
    \end{align*}
    and we compute 
    \begin{align*}
        (\textrm{LHS})&=\levi_{e_i}(K(e_j,e_k))-K(\levi_{e_i}e_j,e_k)-K(e_j,\levi_{e_i}e_k) \\
                    &=\levi_{e_i}(K(e_j,e_k))-\frac{1}{2}K(e_{ij},e_k)-\frac{1}{2}K(e_j,e_{ik}),
    \end{align*}
    and since the $e_{ii}$ components of the second and third term of the left-hand side are equal to zero from the equation \eqref{jklmi}, the only $e_{ii}$ component of the left-hand side is $\frac{1}{\sqrt{2}}K_{jk}^i$ of the first term. Moreover, we compute the right-hand side
    \begin{align*}
        (\textrm{RHS})&=\levi_{e_j}(K(e_i,e_k))-K(\levi_{e_j}e_i,e_k)-K(e_i,\levi_{e_j}e_k) \\
        &=\levi_{e_j}(K(e_i,e_k))-\frac{1}{2}K(e_{ij},e_k)-\frac{1}{2}K(e_i,e_{jk}),
    \end{align*}
    and as in the left-hand side, the $e_{ii}$ components of the second and third terms are equal to zero, and the $e_{ii}$ component is also equal to zero in the first term. Thus, we can see that
    \begin{align*}
    K_{jk}^i=0 \quad (i<j<k).
    \end{align*}
    Moreover, by permutating $i,j$ and $k$, we obtain
    \begin{align}
        K_{jk}^i=0 \quad (i, j~\textrm{and}~k~\textrm{are different}).  \label{jki}
    \end{align}
    Next, we will show that $K_{jj}^i=0$ for the case $i<j$. From the condition $(5)$, we have
    \begin{align}
    (\levi_{e_i}K)(e_j,e_j)=(\levi_{e_j}K)(e_i,e_j). \label{levi_ijj}
    \end{align}
    We compute the left-hand side of the equation \eqref{levi_ijj}
    \begin{align*}
        (\textrm{LHS})&=\levi_{e_i}(K(e_j,e_j))-2K(\levi_{e_i}e_j,e_j) \\
        &=\levi_{e_i}(K(e_j,e_j))-K(e_{ij},e_j),
    \end{align*}
    and the right-hand side of the equation \eqref{levi_ijj}
    \begin{align*}
        (\textrm{RHS})&=\levi_{e_j}(K(e_i,e_j))-K(\levi_{e_j}e_i,e_j)-K(e_i,\levi_{e_j}e_j) \\
        &=\levi_{e_j}(K(e_i,e_j))-\frac{1}{2}K(e_{ij},e_j)-\frac{1}{\sqrt{2}}K(e_i,e_{jj}).
    \end{align*}
    From the equation \eqref{jklmi}, the $e_{ii}$ component of the left-hand side is $\frac{1}{\sqrt{2}}K_{jj}^i$ and the $e_{ii}$ component of the right-hand side is equal to zero, and thus we can see that
    \begin{align}
    K_{jj}^i=0\quad (i<j). \label{jji:i<j}
    \end{align}
    In a similar manner, we have
    \begin{align}
    K_{ii}^j=0\quad (i<j). \label{iij:i<j}
    \end{align}
    From the equations \eqref{jji:i<j} and \eqref{iij:i<j}, we obtain
    \begin{align}
    K_{jj}^i=0 \quad (i\neq j). \label{jji}
    \end{align}
    At the end of this step, we will show that $K_{jj}^j=0$.
    From the equation \eqref{jklmi} and \eqref{jji}, the $e_{ij}$ component of the left-hand side of the equation \eqref{levi_ijj} is equal to $\frac{1}{2}K_{jj}^j$ and the $e_{ij}$ component of the right-hand side of the equation \eqref{levi_ijj} is equal to zero, and then we obtain
    \begin{align}
        K_{jj}^j=0. \label{jjj}
    \end{align}
    Thus, from the equations \eqref{jki}, \eqref{jji} and \eqref{jjj}, we can see that
    \begin{align}
        K_{jk}^i=0.
    \end{align}

    \textbf{Step 3 $(K_{(k,l)(m,s)}^{(i,j)})$:} We consider the coefficients which are $K_{(k,l)(m,s)}^{(i,j)}$. In this case, some coefficients are equal to zero and other are not. First, we will show that $K(e_{jj},e_{kk})=0$ if $j\neq k$. 
    In the case of $n=2$, it automatically follows that
    \begin{gather} \label{state:jjkklm}
    \mbox{$K_{(j,j)(k,k)}^{(l,m)}$ is zero if either $l$ or $m$ is different from both $j$ and $k$,}
    \end{gather}
    since no such $l$ and $m$ can be taken.
    
    Suppose that $n\ge 3$, $i\neq j$ and $i\neq k$. From \eqref{eq:iideri}, we compute
    \begin{align*}
    0=(\levi_{e_{jj}}K)(e_i,e_{kk})&=(\levi_{e_i}K)(e_{jj},e_{kk}) \\
    &=\levi_{e_i}(K(e_{jj},e_{kk}))-K(\levi_{e_i}e_{jj},e_{kk})-K(e_{jj},\levi_{e_i}e_{kk}) \\
    &=\levi_{e_i}(K(e_{jj},e_{kk})) \\
    &=\sum_{\alpha\in I}K_{(j,j)(k,k)}^{\alpha}\levi_{e_i}e_{\alpha}.
    \end{align*}
    Then, since $\levi_{e_i}e_{lm}$ is not equal to zero if $i=l$ or $i=m$, the statement \eqref{state:jjkklm} is satisfied.
    When $j<k$, $K(e_{jj},e_{kk})$ may have only the $e_{jk}, e_{jj}$, and $e_{kk}$ components from the equation \eqref{jklmi} and \eqref{state:jjkklm}. 
    
    However, from \eqref{eq:iideri}, we compute
    \begin{align*}
        0=(\levi_{e_{jj}}K)(e_k,e_{jj})&=(\levi_{e_k}K)(e_{jj},e_{jj}) \\
        &=\levi_{e_k}(K(e_{jj},e_{jj}))-2K(\levi_{e_k}e_{jj},e_{jj}) \\
        &=\levi_{e_k}(K(e_{jj},e_{jj}))\\
        &=\sum_{\alpha\in I}K_{(j,j)(j,j)}^{\alpha}\levi_{e_k}e_{\alpha}.
    \end{align*}
    By considering the $e_{k}$ component, we obtain $K_{(j,j)(j,j)}^{(k,k)}=0$. In a similar manner, we can check that $K_{(k,k)(k,k)}^{(j,j)}=0$, and it follows that the $e_{jj}$ and $e_{kk}$ components of $K(e_{jj},e_{kk})$ are equal to zero.

    Moreover, from \eqref{eq:iideri}, we compute
    \begin{align*}
    0=(\levi_{e_{jj}}K)(e_{jk},e_{kk})&=(\levi_{e_{jk}}K)(e_{jj},e_{kk}) \\
    &=\levi_{e_{jk}}(K(e_{jj},e_{kk}))-K(\levi_{e_{jk}}e_{jj},e_{kk})-K(e_{jj},\levi_{e_{jk}}e_{kk}) \\
    &=\levi_{e_{jk}}(K(e_{jj},e_{kk}))+\frac{1}{\sqrt{2}}K(e_{jk},e_{kk})-\frac{1}{\sqrt{2}}K(e_{jj},e_{jk}),
    \end{align*}
    and we can see that
    \begin{align}
        \levi_{e_{jk}}(K(e_{jj},e_{kk}))=-\frac{1}{\sqrt{2}}K(e_{jk},e_{kk})+\frac{1}{\sqrt{2}}K(e_{jj},e_{jk}). \label{referee}
    \end{align}
    Then, the $e_{{kk}}$ component of the left-hand side is ${-}\frac{1}{\sqrt{2}}K_{(j,j)(k,k)}^{(j,k)}$, and the $e_{{kk}}$ component of the right-hand side is $-\frac{1}{\sqrt{2}}K_{{(k,k)}(k,k)}^{(j,k)}+\frac{1}{\sqrt{2}}K_{(j,j)(k,k)}^{(j,{k})}$. 
    However, we can check that $K_{{(k,k)}(k,k)}^{(j,{k})}$ is equal to zero. In fact, from \eqref{eq:iideri}, we compute
    {
    \begin{align*}
        0=(\levi_{e_{kk}}K)(e_j,e_{kk})&=(\levi_{e_j}K)(e_{kk},e_{kk}) \\
        &=\levi_{e_j}K(e_{kk},e_{kk})-K(\levi_{e_j}e_{kk},e_{kk})-K(e_{kk},\levi_{e_j}e_{kk})\\
        &=\levi_{e_j}K(e_{kk},e_{kk})\\
        &=\sum_{\alpha\in I}K_{(k,k)(k,k)}^{\alpha}\levi_{e_j}e_{\alpha}.
    \end{align*}
    }
    By considering the $e_{{k}}$ component, we obtain $K_{{(k,k)}(k,k)}^{(j,{k})}=0$. Then, the $e_{{kk}}$ component of the right-hand side of \eqref{referee} is ${\frac{1}{\sqrt{2}}K_{(j,j)(k,k)}^{(j,k)}}$, and we can see that $K_{(j,j)(k,k)}^{(j,k)}$ is equal to zero.
    
    Thus, we obtain
    \begin{align}
        K(e_{jj},e_{kk})=0 \quad (j<k).
    \end{align}
    Interchanging $j$ and $k$, we obtain
    \begin{align}
        K(e_{jj},e_{kk})=0 \quad (j\neq k).\label{jjkk}
    \end{align}
    Next, we will show that $K(e_{jj},e_{kl})=0$ if $j$, $k$ and $l$ are different. From \eqref{eq:iideri} and the equation \eqref{jjkk}, we can see that
    \begin{align*}
    0=(\levi_{e_{kk}}K)(e_{jj},e_{kl})&=(\levi_{e_{kl}}K)(e_{jj},e_{kk}) \\
    &=\levi_{e_{kl}}(K(e_{jj},e_{kk}))-K(\levi_{e_{kl}}e_{jj},e_{kk})-K(e_{jj},\levi_{e_{kl}}e_{kk})\\
    &=-K(e_{jj},\levi_{e_{kl}}e_{kk}) \\
    &=\frac{1}{\sqrt{2}}K(e_{jj},e_{kl}). 
    \end{align*}
    Thus, it follows that
    \begin{align}
    K(e_{jj},e_{kl})=0 \quad (j, k ~\textrm{and}~l~\textrm{are different}). \label{jjkl}
    \end{align}
    Furthermore, if $i,j,k$ and $l$ are different, then from \eqref{eq:iideri} and the equation \eqref{jjkl}, we compute
    \begin{align*}
    0=(\levi_{e_{jj}}K)(e_{ij},e_{kl})&=(\levi_{e_{ij}}K)(e_{jj},e_{kl}) \\
    &=\levi_{e_{ij}}(K(e_{jj},e_{kl}))-K(\levi_{e_{ij}}e_{jj},e_{kl})-K(e_{jj},\levi_{e_{ij}}e_{kl}) \\
    &=-K(\levi_{e_{ij}}e_{jj},e_{kl}) \\
    &=-\frac{1}{\sqrt{2}}K(e_{ij},e_{kl}),
    \end{align*}
    and then, it follows that
    \begin{align}
    K(e_{ij},e_{kl})=0 \quad(i, j, k ~\textrm{and}~ l ~\textrm{are different}, i<j~\textrm{and}~k<l).\label{ijkl}
    \end{align}
    Thus, from \eqref{jjkk}, \eqref{jjkl} and \eqref{ijkl}, it follows that
    \begin{align}
        K(e_{ij},e_{kl})=0 \quad (\{i,j\}\cap \{k,l\}=\emptyset). \label{ijcapkl}
    \end{align}
    That is, it follows that
    \begin{gather}
        \mbox{$K_{(k,l)(m,s)}^{(i,j)}=0$ if $\{i,j\}\cap \{k,l\}$, $\{k,l\}\cap\{m,s\}$ or $\{m,s\}\cap\{i,j\}$ is an empty set. }
    \end{gather}
    
    Next, we will show that the coefficients $K_{(k,l)(m,s)}^{(i,j)}$ are equal to zero except for $K_{(i,j)(j,k)}^{(i,k)}$  where $i\leq j\leq k$. For $i<j$, from \eqref{eq:iideri}, we compute
    \begin{align*}
    0= (\levi_{e_{ii}}K)(e_{ij},e_{ii})&=(\levi_{e_{ij}}K)(e_{ii},e_{ii}) \\
    &=\levi_{e_{ij}}(K(e_{ii},e_{ii}))-2K(\levi_{e_{ij}}e_{ii},e_{ii}) \\
    &=\levi_{e_{ij}}(K(e_{ii},e_{ii}))+\sqrt{2}K(e_{ii},e_{ij}),
    \end{align*}
    and since the $e_{ii}$ component of the first term is $\frac{1}{\sqrt{2}}K_{(i,i)(i,i)}^{(i,j)}$ and the $e_{ii}$ component of the second term is $\sqrt{2}K_{(i,i)(i,i)}^{(i,j)}$, it follows that
    \begin{align}
    K_{(i,i)(i,i)}^{(i,j)}=0 \quad (i<j). \label{iiiiij}
    \end{align}
    In a similar manner, we can check that
    \begin{align}
        K_{(j,j)(j,j)}^{(i,j)}=0 \quad (i<j). \label{jjjjij}
    \end{align}
    For $i<j<k$, from \eqref{eq:iideri} and the equation \eqref{ijcapkl}, we compute
    \begin{align*}
    0=(\levi_{e_{ii}}K)(e_{ij},e_{ik})&=(\levi_{e_{ij}}K)(e_{ii},e_{ik}) \\
    &=\levi_{e_{ij}}(K(e_{ii},e_{ik}))-K(\levi_{e_{ij}}e_{ii},e_{ik})-K(e_{ii},\levi_{e_{ij}}e_{ik}) \\
    &=\levi_{e_{ij}}(K(e_{ii},e_{ik}))+\frac{1}{\sqrt{2}}K(e_{ij},e_{ik})+\frac{1}{2}K(e_{ii},
    e_{jk}) \\
    &=\levi_{e_{ij}}(K(e_{ii},e_{ik}))+\frac{1}{\sqrt{2}}K(e_{ij},e_{ik}),
    \end{align*}
    and since the $e_{ii}$ component of the first term is $\frac{1}{\sqrt{2}}K_{(i,i)(i,j)}^{(i,k)}$ and the $e_{ii}$ component of the second term is $\frac{1}{\sqrt{2}}K_{(i,i)(i,j)}^{(i,k)}$, it follows that
    \begin{align}
    K_{(i,i)(i,j)}^{(i,k)}=0 \quad (i<j<k). \label{iiijik}
    \end{align}
    In a similar manner, we can check that
    \begin{align}
        K_{(j,j)(i,j)}^{(j,k)}=K_{(k,k)(i,k)}^{(j,k)}=0 \quad (i<j<k). \label{jjijjk}
    \end{align}
    In addition, 
    since the $e_{ij}$ component of the first term is equal to zero from \eqref{iiiiij}, and the $e_{ij}$ component of the second term is $\frac{1}{\sqrt{2}}K_{(i,j)(i,j)}^{(i,k)}$, we can see that
    \begin{align}
    K_{(i,j)(i,j)}^{(i,k)}=0 \quad (i<j<k). \label{ijijik}
    \end{align}
    In a similar manner, we can check that
    \begin{align}
        K_{(i,k)(i,k)}^{(i,j)}=K_{(i,j)(i,j)}^{(j,k)}=K_{(j,k)(j,k)}^{(i,j)}=K_{(i,k)(i,k)}^{(j,k)}=K_{(j,k)(j,k)}^{(i,k)}=0 \quad (i<j<k). \label{ikikij}
    \end{align}
    Moreover, from \eqref{eq:iideri} and the equation \eqref{ijcapkl}, we compute
    \begin{align*}
    0=(\levi_{e_{ii}}K)(e_{ij},e_{ij})&=(\levi_{e_{ij}}K)(e_{ii},e_{ij}) \\
    &=\levi_{e_{ij}}(K(e_{ii},e_{ij}))-K(\levi_{e_{ij}}e_{ii},e_{ij})-K(e_{ii},\levi_{e_{ij}}e_{ij}) \\
    &=\levi_{e_{ij}}(K(e_{ii},e_{ij}))-\frac{1}{\sqrt{2}}K(e_{ij},e_{ij})-\frac{1}{\sqrt{2}}K(e_{ii},e_{ii})+\frac{1}{\sqrt{2}}K(e_{ii},e_{jj}) \\
    &=\levi_{e_{ij}}(K(e_{ii},e_{ij}))-\frac{1}{\sqrt{2}}K(e_{ij},e_{ij})-\frac{1}{\sqrt{2}}K(e_{ii},e_{ii}),
    \end{align*}
    and since the $e_{ij}$ components of the first and third terms  are equal to zero from the equation \eqref{iiiiij} and the $e_{ij}$ components of the second term is $-\frac{1}{\sqrt{2}}K_{(i,j)(i,j)}^{(i,j)}$, and thus we can see that
    \begin{align}
    K_{(i,j)(i,j)}^{(i,j)}=0 \quad(i<j). \label{ijijij}
    \end{align}
    Finally, for $i<j<k<l$, consider the equation
    \begin{align*}
        (\levi_{e_{ij}}K)(e_{ik},e_{il})=(\levi_{e_{ik}}K)(e_{ij},e_{il}).
    \end{align*}
    From the equation \eqref{ijcapkl}, we compute the left-hand side
    \begin{align*}
        (\textrm{LHS})&=\levi_{e_{ij}}(K(e_{ik},e_{il}))-K(\levi_{e_{ij}}e_{ik},e_{il})-K(e_{ik},\levi_{e_{ij}}e_{il}) \\
                        &=\levi_{e_{ij}}(K(e_{ik},e_{il}))+\frac{1}{2}K(e_{jk},e_{il})+\frac{1}{2}K(e_{ik},e_{jl}) \\
                        &=\levi_{e_{ij}}(K(e_{ik},e_{il})),
    \end{align*}
    and the right-hand side
    \begin{align*}
        (\textrm{RHS})&=\levi_{e_{ik}}(K(e_{ij},e_{il}))-K(\levi_{e_{ik}}e_{ij},e_{il})-K(e_{ij},\levi_{e_{ik}}e_{il}) \\
                        &=\levi_{e_{ik}}(K(e_{ij},e_{il}))+\frac{1}{2}K(e_{jk},e_{il})+\frac{1}{2}K(e_{ij},e_{kl}) \\
                        &=\levi_{e_{ik}}(K(e_{ij},e_{il})).
    \end{align*}
    Since the $e_{jj}$ component of the right-hand side is equal to zero and the $e_{jj}$ component of the left-hand side is $-\frac{1}{\sqrt{2}}K_{(i,j)(i,k)}^{(i,l)}$, we can see that
    \begin{align}
    K_{(i,j)(i,k)}^{(i,l)}=0 \quad (i<j<k<l). \label{ijikil}
    \end{align}
    In a similar manner, we can check that
    \begin{align}
        K_{(i,j)(j,k)}^{(j,l)}=K_{(i,k)(j,k)}^{(k,l)}=K_{(i,l)(j,l)}^{(k,l)}=0 \quad (i<j<k<l).
    \end{align}
    From the above arguments, we can see that the coefficients $K_{(k,l)(m,s)}^{(i,j)}$ are equal to zero except for $K_{(i,i)(i,i)}^{(i,i)},K_{(i,j)(i,j)}^{(i,i)}, K_{(i,j)(i,j)}^{(j,j)}$ and $K_{(i,j)(j,k)}^{(i,k)}$ where $i<j<k$.

    \textbf{Step 4 $(K_{j(k,l)}^i)$:} We consider the coefficients $K_{j(k,l)}^i$. As in Step $3$, some coefficients are equal to zero and other are not. First, we will show that $K(e_i,e_{kl})=0$ if $i\neq k$ and $i\neq l$. From \eqref{eq:iideri} and the equation \eqref{ijcapkl}, we compute
    \begin{align*}
    0=(\levi_{e_{ii}}K)(e_{i},e_{kl})&= (\levi_{e_i}K)(e_{ii},e_{kl}) \\
    &=\levi_{e_i}(K(e_{ii},e_{kl}))-K(\levi_{e_i}e_{ii},e_{kl})-K(e_{ii},\levi_{e_i}e_{kl}) \\
    &=-K(\levi_{e_i}e_{ii},e_{kl})\\
    &=\frac{1}{\sqrt{2}}K(e_i,e_{kl}).
    \end{align*}
    Then, we can see that
    \begin{align}
        K(e_i,e_{kl})=0 \quad (i\neq k~\textrm{and}~i\neq l).\label{eq:ikl}
    \end{align}
     That is, there are four possible non-zero combinations of $K_{i(i,i)}^i$, $K_{i(i,j)}^i$, $K_{j(i,j)}^j$ and $K_{j(i,j)}^i$ where $i<j$. Note that $K_{i(i,i)}^j, K_{i(i,j)}^k$ and $K_{j(i,j)}^k$ are equal to zero from the symmetry of $K$. At the end of this step, we will show that $K_{i(i,j)}^i=K_{j(i,j)}^j=0$. From \eqref{eq:iideri} and the equation \eqref{eq:ikl}, we compute
    \begin{align*}
    0=(\levi_{e_{ii}}K)(e_i,e_{ij})&=(\levi_{e_i}K)(e_{ii},e_{ij}) \\
    &=\levi_{e_i}(K(e_{ii},e_{ij}))-K(\levi_{e_i}e_{ii},e_{ij})-K(e_{ii},\levi_{e_i}e_{ij}) \\
    &=\levi_{e_i}(K(e_{ii},e_{ij}))+\frac{1}{\sqrt{2}}K(e_i,e_{ij})+\frac{1}{2}K(e_j,e_{ii}) \\
    &=\levi_{e_i}(K(e_{ii},e_{ij}))+\frac{1}{\sqrt{2}}K(e_i,e_{ij}),
    \end{align*}
    and since the $e_i$ component of the first term is equal to zero from the equation \eqref{iiiiij} and the $e_i$ component of the second term is $\frac{1}{\sqrt{2}}K_{i(i,j)}^i$, we can see that
    \begin{align}
    K_{i(i,j)}^i=0. \label{iiji}
    \end{align}
    In a similar manner, we can check that
    \begin{align}
    K_{j(i,j)}^j=0. \label{jijj}
    \end{align}
    From the above arguments, we can see that the coefficients $K_{j(k,l)}^i$ are equal to zero except for $K_{i(i,i)}^i$ and $K_{j(i,j)}^i$

    \textbf{Final Step:} Finally, we obtain an equality between the coefficients that are not equal to zero. From the results of Step $1$ to Step $4$, there are only six possible non-zero combinations of $K_{(i,i)(i,i)}^{(i,i)},K_{(i,j)(i,j)}^{(i,i)},K_{(i,j)(i,j)}^{(j,j)},K_{(i,j)(j,k)}^{(i,k)},K_{i(i,i)}^i$ and $K_{j(i,j)}^i,$ where $i<j<k$. First, we will show that $K_{i(i,i)}^{i}=K_{j(j,j)}^{j}$ for any indices $i$ and $j$. From \eqref{eq:iideri}, we compute
    \begin{align*}
    0 =(\levi_{e_{ii}}K)(e_{i},e_{ii})= (\levi_{e_i}K)(e_{ii},e_{ii})&=\levi_{e_i}(K(e_{ii},e_{ii}))-2K(\levi_{e_i}e_{ii},e_{ii}) \\
    &=\levi_{e_i}(K(e_{ii},e_{ii}))+\sqrt{2}K(e_i,e_{ii}),
    \end{align*}
    and then considering the $e_{i}$ components, we can see that
    \begin{align}
    K_{(i,i)(i,i)}^{(i,i)}=2K_{i(i,i)}^i.  \label{iiiiii}
    \end{align}
    Next, from \eqref{eq:iideri}, we compute
    \begin{align*}
   0=(\levi_{e_{ii}}K)(e_i,e_j)= (\levi_{e_j}K)(e_i,e_{ii})&=\levi_{e_j}(K(e_i,e_{ii}))-K(\levi_{e_j}e_i,e_{ii}) \\
    &=\levi_{e_j}(K(e_i,e_{ii}))-\frac{1}{2}K(e_{ij},e_{ii}),
    \end{align*}
    and then considering the $e_{ij}$ components, we can see that
    \begin{align}
    K_{i(i,i)}^i=K_{(i,j)(i,j)}^{(i,i)}. \label{ijijii}
    \end{align}
    In a similar manner, we can also show that
    \begin{align}
    K_{j(j,j)}^j=K_{(i,j)(i,j)}^{(j,j)}. \label{ijjjij}
    \end{align}
    Moreover, from \eqref{eq:iideri} and the equation \eqref{eq:ikl}, we compute
    \begin{align*}
    0=(\levi_{e_{ii}}K)(e_j,e_{i})=(\levi_{e_i}K)(e_j,e_{ii})&=\levi_{e_i}(K(e_j,e_{ii}))-K(\levi_{e_i}e_j,e_{ii})-K(e_j,\levi_{e_i}e_{ii}) \\
    &=-\frac{1}{2}K(e_{ij},e_{ii})+\frac{1}{\sqrt{2}}K(e_i,e_j),
    \end{align*}
    and considering the $e_{ij}$ components, we can see that
    \begin{align}
    \frac{1}{2}K_{(i,j)(i,j)}^{(i,i)}=\frac{1}{\sqrt{2}}K_{j(i,j)}^i. \label{jiji1}
    \end{align}
    In a similar manner, we can also see that
    \begin{align}
    \frac{1}{2}K_{(i,j)(i,j)}^{(j,j)}=\frac{1}{\sqrt{2}}K_{j(i,j)}^i \label{jiji2}
    \end{align}
    From the equations \eqref{ijijii}, \eqref{ijjjij}, \eqref{jiji1} and \eqref{jiji2}, it follows that
    \begin{align}
    K_{i(i,i)}^i=K_{j(j,j)}^j.
    \end{align}
    That is, there exists some constant $p$ such that
    \begin{align}
    K_{i(i,i)}^i=p
    \end{align}
    for any index $i$. In addition, from the equations \eqref{iiiiii}, \eqref{ijijii} and \eqref{jiji1}, we see that
    \begin{align*}
    K_{(i,i)(i,i)}^{(i,i)}=2p, \quad  K_{(i,j)(i,j)}^{(i,i)}=K_{(i,j)(i,j)}^{(j,j)}=p \quad \mbox{and}\quad 
    K_{j(i,j)}^i=\frac{\sqrt{2}}{2}p.
    \end{align*}
    Finally, from \eqref{eq:iideri} and the equation \eqref{ijcapkl}, we compute
    \begin{align*}
    0=(\levi_{e_{ii}}K)(e_{ij},e_{ik})= (\levi_{e_{ij}}K)(e_{ii},e_{ik})&=\levi_{e_{ij}}(K(e_{ii},e_{ik}))-K(\levi_{e_{ij}}e_{ii},e_{ik})-K(e_{ii},\levi_{e_{ij}}e_{ik}) \\
    &=\levi_{e_{ij}}(K(e_{ii},e_{ik}))+\frac{1}{\sqrt{2}}K(e_{ij},e_{ik})+\frac{1}{2}K(e_{ii},e_{jk}) \\
    &=\levi_{e_{ij}}(K(e_{ii},e_{ik}))+\frac{1}{\sqrt{2}}K(e_{ij},e_{ik}),
    \end{align*}
    and then considering  the $e_{jk}$ component, we can see that
    \begin{align*}
    \sqrt{2}K_{(i,k)(i,k)}^{(i,i)}=2K_{(i,j)(j,k)}^{(i,k)},
    \end{align*}
    and eventually it follows that
    \begin{align}
    K_{(i,j)(j,k)}^{(i,k)}=\frac{\sqrt{2}}{2}p.
    \end{align}
    Summarizing the above arguments, for $i<j<k$ we have
    \begin{gather*}
    K_{i(i,i)}^i=p, \quad
    K_{(i,i)(i,i)}^{(i,i)}=2p, \quad
    K_{(i,j)(i,j)}^{(i,i)}=K_{(i,j)(i,j)}^{(j,j)}=p, \\
    K_{j(i,j)}^i=\frac{\sqrt{2}}{2}p, \quad
    K_{(i,j)(j,k)}^{(i,k)}=\frac{\sqrt{2}}{2}p,
    \end{gather*}
    and the coefficients are equal to zero for other index patterns. Set the parameter $p$ to be $p=-\frac{\sqrt{2}}{2}$. Then, from the Lemma \ref{Lem:CK}, it follows that
    \begin{align*}
    C(e_i,e_i,e_{ii})&=-2g(K(e_i,e_i),e_{ii}) =-2K_{i(i,i)}^i =\sqrt{2}, \\
    C(e_{ii},e_{ii},e_{ii})&=-2g(K(e_{ii},e_{ii}),e_{ii})=-2K_{(i,i)(i,i)}^{(i,i)} =2\sqrt{2}, \\
    C(e_{ii},e_{ij},e_{ij})&=-2g(K(e_{ii},e_{ij}),e_{ij}) =-2K_{(i,i)(i,j)}^{(i,j)} =\sqrt{2}, \\
    C(e_{jj},e_{ij},e_{ij})&=-2g(K(e_{jj},e_{ij}),e_{ij}) =-2K_{(j,j)(i,j)}^{(i,j)} =\sqrt{2}, \\
    C(e_i,e_j,e_{ij})&=-2g(K(e_i,e_j),e_{ij}) =-2K_{j(i,j)}^i =1, \\
    C(e_{ij},e_{jk},e_{ik})&=-2g(K(e_{ij},e_{jk}),e_{ik}) =-2K_{(i,j)(j,k)}^{(i,k)} =1.
    \end{align*}
    These equations coincide with the results of Amari-Chentsov cubic form in \eqref{cubic}. This completes the proof.
\end{proof}

\subsection*{Acknowledgements}
We would like to thank Prof. Jun-ichi Inoguchi comments on the manuscripts and letting us know several related references. 
We also would like to thank to the anonymous reviewer for his/her valuable suggestion to improve presentation of the main theorem.

\end{document}